\documentclass[11pt,reqno]{amsart}

\usepackage[utf8]{inputenc}
\usepackage[english]{babel}
\usepackage{ifthen} 

\usepackage{amsmath} 
\usepackage{amssymb} 
\usepackage{mathrsfs} 
\usepackage{mathtools} 
\usepackage{bbm} 
\usepackage{cancel} 
\usepackage{mathdots} 
\usepackage{xfrac} 

\usepackage{amsthm} 
\usepackage[shortlabels]{enumitem}

\usepackage{lipsum} 
\usepackage{verbatim} 
\usepackage{pifont} 
\usepackage[dvipsnames]{xcolor} 
\usepackage{lmodern} 
\usepackage{marginnote} 

\usepackage{wrapfig} 
\usepackage{caption} 

\usepackage{array} 
\usepackage{multirow} 
\usepackage{multicol} 
\usepackage{colortbl} 
\usepackage{makecell} 
\usepackage{arydshln} 

\usepackage{tikz} 
\usetikzlibrary{automata,positioning} 
\usepackage{tkz-euclide} 
\usepackage[customcolors,shade]{hf-tikz} 

\usepackage{geometry} 
\usepackage{titleps} 
\usepackage{footnote} 

\usepackage{hyperref} 
\usepackage{cleveref} 

\newtheorem{theorem}{Theorem}[section]
\newtheorem{proposition}[theorem]{Proposition}
\newtheorem{lemma}[theorem]{Lemma}
\newtheorem{corollary}[theorem]{Corollary}

\newtheorem{example}[theorem]{Example}

\theoremstyle{definition}
\newtheorem{definition}[theorem]{Definition}

\newcolumntype{L}[1]{>{\raggedright\let\newline\\\arraybackslash\hspace{0pt}}m{#1}}
\newcolumntype{C}[1]{>{\centering\let\newline\\\arraybackslash\hspace{0pt}}m{#1}}
\newcolumntype{R}[1]{>{\raggedleft\let\newline\\\arraybackslash\hspace{0pt}}m{#1}}

\usepackage{float}

\hypersetup{
    colorlinks=true,
    linkcolor=blue,
    filecolor=magenta,
    urlcolor=red,
    citecolor=blue,
}
\usetikzlibrary{decorations.pathmorphing}
\usetikzlibrary{decorations.markings}
\usetikzlibrary{decorations.text}

\usetikzlibrary{fit}
\usetikzlibrary{backgrounds}
\usepackage{subcaption}
\usepackage{adjustbox} 

\definecolor{cfefefe}{RGB}{254,254,254}
\definecolor{c0b0b0b}{RGB}{11,11,11}
\definecolor{cc7c6c3}{RGB}{199,198,195}
\definecolor{cdfdedf}{RGB}{223,222,223}
\definecolor{c0a0a09}{RGB}{10,10,9}
\definecolor{c0a0a0a}{RGB}{10,10,10}
\definecolor{c080808}{RGB}{8,8,8}
\definecolor{cbfbcb6}{RGB}{191,188,182}
\definecolor{cc1beb7}{RGB}{193,190,183}
\definecolor{cb5b4b1}{RGB}{181,180,177}
\definecolor{cd8d8d8}{RGB}{216,216,216}
\definecolor{cb8b7b3}{RGB}{184,183,179}
\definecolor{cc4c2c0}{RGB}{196,194,192}
\definecolor{c1b1b1a}{RGB}{27,27,26}
\definecolor{c9e9991}{RGB}{158,153,145}
\definecolor{cbdbab4}{RGB}{189,186,180}
\definecolor{cb9b7b4}{RGB}{185,183,180}
\definecolor{cb9b8b5}{RGB}{185,184,181}
\definecolor{c1a1a19}{RGB}{26,26,25}
\definecolor{ccfcecb}{RGB}{207,206,203}
\definecolor{cb3aea7}{RGB}{179,174,167}
\definecolor{cc9c7c1}{RGB}{201,199,193}
\definecolor{c6b6a68}{RGB}{107,106,104}
\definecolor{cd3d2cf}{RGB}{211,210,207}
\definecolor{c8b8b8b}{RGB}{139,139,139}
\definecolor{ca6a5a2}{RGB}{166,165,162}
\definecolor{caaa9a6}{RGB}{170,169,166}
\definecolor{c6e6c67}{RGB}{110,108,103}
\definecolor{ca5a4a1}{RGB}{165,164,161}
\definecolor{c9d9c9a}{RGB}{157,156,154}
\definecolor{cb8b7b4}{RGB}{184,183,180}
\definecolor{c1c1c1b}{RGB}{28,28,27}
\definecolor{ca39e96}{RGB}{163,158,150}
\definecolor{ca8a6a0}{RGB}{168,166,160}
\definecolor{cbcbbb8}{RGB}{188,187,184}
\definecolor{c505050}{RGB}{80,80,80}
\definecolor{c9f9b93}{RGB}{159,155,147}
\definecolor{cb4b3b0}{RGB}{180,179,176}
\definecolor{c2c2c2b}{RGB}{44,44,43}
\definecolor{ce2e2e3}{RGB}{226,226,227}
\definecolor{ccbc9c3}{RGB}{203,201,195}

\usetikzlibrary{matrix,fit,shapes.geometric,backgrounds,decorations.pathmorphing,arrows.meta,decorations.pathreplacing}
\usepackage{pdflscape}
\usepackage{forest}
\usepackage{needspace}
\theoremstyle{plain}

\newtheorem{examples}[theorem]{Examples}
\newtheorem{paperdefinition}[theorem]{Definition}
\renewenvironment{definition}{\begin{paperdefinition}}{\end{paperdefinition}}
\newtheorem{rankwidthremark}[theorem]{Remark}
\title[Simple Cuts, Simple Landscapes]
{Simple Cuts, Simple Landscapes:\\
Global Synchronization and Rank-Width-One Matrices}

\author[Wu]{Hongjin Wu\textsuperscript{\normalfont *}}
\thanks{\textsuperscript{\normalfont *}%
Centrum Wiskunde \& Informatica, Science Park 123, 1098 XG Amsterdam,
Netherlands, e-mail: \texttt{hongjin-wu@outlook.com}.}

\date{}

\begin{document}
\raggedbottom
\begin{abstract}
We study how the structure of the coupling matrix $\boldsymbol A$
shapes the energy landscape $E_{\boldsymbol A}(\boldsymbol\theta)
=-\frac12\sum_{i,j}A_{ij}\cos(\theta_i-\theta_j)$ of the Kuramoto model.
We prove that, for an entrywise nonnegative coupling matrix $\boldsymbol A$
with connected support, if $\boldsymbol A$ has rank-width one over
$\mathbb R$, then the landscape is ``simple'', i.e., every local minimum
is synchronized. The rank-width bound is sharp, and the result has many
interesting implications for global synchronization.
The key contribution of this work is to uncover a structural
synchronization mechanism beyond the rank-one regime, which has served
as a starting point for earlier analyses of global synchronization.
\end{abstract}
\maketitle

\section{Introduction}
Synchronization of coupled oscillators on graphs raises a basic
question: how do local interactions produce collective agreement?
Answering this question is fundamental to understanding the phenomenon
of synchronization itself and its role in technological systems, such
as power grids, oscillatory neural networks, and Transformer
dynamics~\cite{DorflerChertkovBullo2013,VassilievaEtAl2011,GeshkovskiEtAl2025};
biological systems, such as circadian clocks and neural
circuits~\cite{YamaguchiEtAl2003,DorflerBullo2014};
and social systems, such as rhythmic applause and pedestrian motion
on footbridges~\cite{NedaEtAl2000,StrogatzEtAl2005}, and so on.

The Kuramoto model, introduced by Yoshiki Kuramoto in
1975~\cite{Kuramoto1975}, serves as a canonical model for studying this
question. It is a system of ordinary differential equations of the form
\[
\frac{d\theta_i}{dt}
=\omega_i+\sum_{j=1}^n A_{ij}\sin(\theta_j-\theta_i),
\qquad i=1,\ldots,n.
\]
Here, the $i$th equation describes the evolution of the phase (angle)
$\theta_i(t)$ of oscillator $i$, placed at vertex $i$ of an undirected
graph $G=(V,E)$ with $V=\{1,\ldots,n\}$. Each oscillator has its own
intrinsic frequency $\omega_i\in\mathbb R$, which determines its rotation
rate in the absence of coupling. Its actual rotation rate is also
influenced by phase differences with its neighbors, with coupling
strengths $A_{ij}>0$ along the edges. These coupling strengths form the
weighted adjacency matrix $\boldsymbol A=(A_{ij})$ of $G$; positive
coupling tends to align neighboring phases.

In this work, we assume a common intrinsic frequency,
$\omega_i=\omega$ for all $i$. Passing to a rotating frame via the change
of variables $\theta_i(t)\mapsto\theta_i(t)-\omega t$, we obtain
\begin{equation}\label{kuramoto}
\frac{d\theta_i}{dt}
=\sum_{j=1}^n A_{ij}\sin(\theta_j-\theta_i),
\qquad i=1,\ldots,n.
\end{equation}
To understand whether all oscillators synchronize
(i.e., $\theta_1=\cdots=\theta_n\pmod{2\pi}$) as $t\to\infty$,
the energy perspective offers a clean view: stack all angles
$\theta_1,\ldots,\theta_n$ into a vector
$\boldsymbol\theta=(\theta_1,\ldots,\theta_n)$, and write
\eqref{kuramoto} as
$\dot{\boldsymbol\theta}=-\nabla E_{\boldsymbol A}(\boldsymbol\theta)$,
where
\begin{equation}\label{E}
E_{\boldsymbol A}(\boldsymbol\theta)
:=-\frac12\sum_{i,j=1}^n A_{ij}\cos(\theta_i-\theta_j).
\end{equation}
This shows that the Kuramoto model \eqref{kuramoto} is a gradient flow:
each trajectory $\boldsymbol\theta(t)$ moves in the direction of steepest
descent of $E_{\boldsymbol A}$, and along every trajectory,
$\frac{d}{dt}E_{\boldsymbol A}(\boldsymbol\theta(t))
=-\|\nabla E_{\boldsymbol A}(\boldsymbol\theta(t))\|^2\leq0$,
indicating that the energy is nonincreasing. Therefore, the geometry of
$E_{\boldsymbol A}$ reveals the asymptotic dynamics of~\eqref{kuramoto}.
For rigorous results on the relationship between the asymptotic behavior
of gradient flows and the geometry of their energy functions, we refer
to~\cite{AbsilKurdyka2006,HarauxJendoubi2015}.

One nice property of $E_{\boldsymbol A}$ is that, when the support graph
is connected, every global minimizer is a fully synchronized state. Indeed, since $A_{ij}\geq0$,
\begin{equation}\label{eq:global-energy-gap}
E_{\boldsymbol A}(\boldsymbol\theta)-E_{\boldsymbol A}(\boldsymbol 0)
=\sum_{1\le i<j\le n}A_{ij}\bigl(1-\cos(\theta_i-\theta_j)\bigr)\geq0.
\end{equation}
Equality holds precisely when the phases agree along every edge, which,
by connectedness, means $\theta_1=\cdots=\theta_n\pmod{2\pi}$.

However, the first-order points of $E_{\boldsymbol A}$ other than its
global minimizers can have a much more complicated structure.
In general, $E_{\boldsymbol A}$ may have nonsynchronized local minima.
Nevertheless, in some cases, the energy landscape is rather simple:
every local minimum is fully synchronized. Therefore, the question now is
to understand which coupling matrices yield such a simple landscape and why.

\Needspace{7\baselineskip}
This problem is known as global synchronization
(GS)~\cite{LingXuBandeira,LuSteinerberger2020,KassabovStrogatzTownsend2021,AbdallaEtAl2026,JainMizgerdSawhney2026,LepsveridzeZhang2026}.
In this work, we ask it in the local-minimum sense%
\footnote{Different notions of global synchronization appear in the
literature; see Section~\ref{subsec:sync-properties} for a discussion
of these notions and their relationships.}:

\par\medskip
\noindent\textbf{Question.}
For which entrywise nonnegative coupling matrices $\boldsymbol A$
with connected support does the associated energy $E_{\boldsymbol A}$
have no spurious local minima (i.e., all local minima satisfy
\mbox{$\theta_1=\cdots=\theta_n\pmod{2\pi}$})?
\par\medskip

Whether all local minima of $E_{\boldsymbol A}$ are synchronized
depends on both the topology of the support graph and the coupling weights.

\smallskip
\noindent\textbf{Topology matters.}
In the unweighted case, $A_{ij}=1$ if vertices $i$ and $j$ are adjacent
and $A_{ij}=0$ otherwise, so the energy is determined entirely by the
graph structure. For some graph structures, the corresponding energy
$E_{\boldsymbol A}(\boldsymbol\theta)$ has nonsynchronized local minima.
For example, on a cycle graph $C_n$ with $n\geq5$,
the equally spaced phases $\theta_k=2\pi(k-1)/n$ form a
nonsynchronized strict local minimum modulo global rotation%
\footnote{A common rotation adds the same constant to all phases
and leaves the energy unchanged. Strictness modulo global rotation
means that every sufficiently nearby configuration not related
by such a rotation has strictly higher energy.}
\cite{WileyStrogatzGirvan2006}.

\smallskip
\noindent\textbf{Weights matter.}
In the weighted case, even when the energy associated with the underlying
unweighted graph has no nonsynchronized local minima, assigning positive
weights to its existing edges can create such minima in the weighted
energy $E_{\boldsymbol A}(\boldsymbol\theta)$. For instance, the unweighted complete
graph $K_5$ is almost globally synchronizing: almost every initial phase
configuration converges to a synchronized configuration \cite[Corollary~4.1]{Taylor2012}.
However, assigning weight $1$ to the edges of a five-cycle and a small
positive weight to its diagonals yields a weighted $K_5$ whose coupling
matrix is close to that of the unweighted $C_5$. Taking the diagonal
weights to be $1/10$ produces a nonsynchronized strict local minimum of
$E_{\boldsymbol A}(\boldsymbol\theta)$ modulo global rotation at the equally spaced phase
configuration; see Section~\ref{subsec:weighted-example},
Example~\ref{ex:weighted-k5}
for the explicit configuration and its verification.

Figure~\ref{fig:intro-weights} compares these three graphs.

\begin{figure}[!htbp]
\centering
\begingroup
\captionsetup[subfigure]{font=small,justification=centering,skip=4pt}
\tikzset{
  intro edge/.style={draw=black,line width=0.9pt},
  intro weak/.style={draw=black!45,line width=0.9pt},
  intro vertex/.style={circle,fill=black,inner sep=1.6pt}
}
\begin{subfigure}[t]{0.32\linewidth}
\centering
\begin{tikzpicture}[x=1cm,y=1cm]
  \path[use as bounding box] (-2.2,-1.8) rectangle (2.2,1.8);
  \foreach \k in {0,...,4}
    \coordinate (p\k) at ({90-72*\k}:1.18);
  \draw[intro edge] (p0)--(p1)--(p2)--(p3)--(p4)--cycle;
  \foreach \k in {0,...,4} {
    \node[intro vertex] at (p\k) {};
  }
\end{tikzpicture}
\caption{}
\end{subfigure}\hfill
\begin{subfigure}[t]{0.32\linewidth}
\centering
\begin{tikzpicture}[x=1cm,y=1cm]
  \path[use as bounding box] (-2.2,-1.8) rectangle (2.2,1.8);
  \foreach \k in {0,...,4}
    \coordinate (p\k) at ({90-72*\k}:1.18);
  \draw[intro edge] (p0)--(p2)--(p4)--(p1)--(p3)--cycle;
  \draw[intro edge] (p0)--(p1)--(p2)--(p3)--(p4)--cycle;
  \foreach \k in {0,...,4} {
    \node[intro vertex] at (p\k) {};
  }
\end{tikzpicture}
\caption{}
\end{subfigure}\hfill
\begin{subfigure}[t]{0.32\linewidth}
\centering
\begin{tikzpicture}[x=1cm,y=1cm]
  \path[use as bounding box] (-2.2,-1.8) rectangle (2.2,1.8);
  \foreach \k in {0,...,4}
    \coordinate (p\k) at ({90-72*\k}:1.18);
  \draw[intro weak] (p0)--(p2)--(p4)--(p1)--(p3)--cycle;
  \draw[intro edge] (p0)--(p1)--(p2)--(p3)--(p4)--cycle;
  \foreach \k in {0,...,4} {
    \node[intro vertex] at (p\k) {};
  }
\end{tikzpicture}
\caption{}
\end{subfigure}

\caption{Graph structure and coupling weights both affect the energy
landscape. Black edges have weight $1$; gray edges have weight $1/10$.
In (A), the energy of the unweighted cycle $C_5$ has a nonsynchronized
strict local minimum that is also a non-flat first- and second-order point.
In (B), all local minima of the energy of the
unweighted complete graph $K_5$ are synchronized. In (C), the energy
of the weighted $K_5$ has a nonsynchronized strict local minimum.
Strictness is understood modulo global rotation.
See Proposition~\ref{prop:cycles} for (A), Theorem~\ref{main} for (B),
and Example~\ref{ex:weighted-k5} for (C).}
\label{fig:intro-weights}
\endgroup
\end{figure}

\Needspace{7\baselineskip}
\subsection{Main result and its scope}
\label{subsec:contributions}
Our main result shows that if the coupling matrix $\boldsymbol A$ has
rank-width one over $\mathbb R$, then every local minimum of
$E_{\boldsymbol A}$ is a fully synchronized state.

\begin{theorem}\label{main}
Let $\boldsymbol A\in\mathbb R^{n\times n}$ be symmetric, entrywise
nonnegative with zero diagonal, and with
connected support graph. Assume $\boldsymbol{A}$ has rank-width one over $\mathbb R$.
Every local minimum of $E_{\boldsymbol A}(\boldsymbol\theta)$ is a fully synchronized state.
The same conclusion also holds for every non-flat first- and second-order point.
\end{theorem}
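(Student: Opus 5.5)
The proof will go by strong induction on $n$, using the rank-one structure of the cuts of a rank-width-one decomposition. Recall that $\boldsymbol A$ has rank-width one when there is a subcubic tree whose leaves are the vertices such that, for every tree edge, the induced bipartition $V=X\sqcup Y$ has $\operatorname{rank}\boldsymbol A[X,Y]\le 1$. Since $\boldsymbol A\ge 0$ and the support is connected, each such block is nonzero. It can therefore be written $\boldsymbol A[X,Y]=\boldsymbol u\boldsymbol v^{\top}$ with $\boldsymbol u,\boldsymbol v\ge0$.

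\textbf{Step 1 (the energy splits across a cut).} Fix one such cut and set $z_X=\sum_{i\in X}u_ie^{\mathrm i\theta_i}$ and $z_Y=\sum_{j\in Y}v_je^{\mathrm i\theta_j}$. Then
\[
E_{\boldsymbol A}(\boldsymbol\theta)=E_{\boldsymbol A[X]}(\boldsymbol\theta_X)+E_{\boldsymbol A[Y]}(\boldsymbol\theta_Y)-\operatorname{Re}\bigl(z_X\overline{z_Y}\bigr).
\]
Rotating all of $Y$ rigidly by $\varphi$ multiplies $z_Y$ by $e^{\mathrm i\varphi}$. The first-order condition in $\varphi$ gives $\operatorname{Im}(z_X\overline{z_Y})=0$, and the second-order condition gives $\operatorname{Re}(z_X\overline{z_Y})\ge0$. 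Hence, when both are nonzero, $z_X$ and $z_Y$ point in the same direction.

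\textbf{Step 2 (reduction to smaller instances).} Collapse $Y$ to a single new vertex $y$ with phase $\varphi=\arg z_Y$ and couplings $A'_{iy}=u_i|z_Y|$ for $i\in X$. Call the resulting $(|X|+1)\times(|X|+1)$ matrix $\boldsymbol A'$. Then, up to an additive constant,
\[
E_{\boldsymbol A'}(\boldsymbol\theta_X,\varphi)=E_{\boldsymbol A[X]}(\boldsymbol\theta_X)-|z_Y|\operatorname{Re}\bigl(z_Xe^{-\mathrm i\varphi}\bigr),
\]
and this coincides with $E_{\boldsymbol A}$ restricted to perturbations $(\delta\boldsymbol\theta_X,\delta\varphi\,\mathbf 1_Y)$. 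Consequently $(\boldsymbol\theta_X,\arg z_Y)$ is again a local minimum, respectively a first- and second-order point, of $E_{\boldsymbol A'}$. I will also check three further facts about $\boldsymbol A'$.
\begin{itemize}
\item It has rank-width one: replace the subtree on the $Y$ side by a single leaf $y$; every remaining cut then has $y$'s column proportional to $\boldsymbol u$, so ranks do not increase.
\item It has connected support, provided $z_Y\ne0$.
\item It inherits non-flatness, since its Hessian is a compression of the original one.
\end{itemize}
Choose a tree edge with $|X|,|Y|\ge2$, which exists once $n\ge4$. Both reduced instances are then strictly smaller, and the cases $n\le3$ are handled by direct computation. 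Induction gives $\theta_i\equiv\arg z_Y$ for all $i\in X$ and $\theta_j\equiv\arg z_X$ for all $j\in Y$. Combined with the alignment $\arg z_X=\arg z_Y$ from Step~1, this shows that $\boldsymbol\theta$ is fully synchronized.

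\textbf{Main obstacle: the degenerate case $z_X=0$ or $z_Y=0$.} In that case the cut term vanishes identically along the rigid rotation of $Y$, and the collapsed vertex decouples. This is exactly where a flat direction arises. For non-flat first- and second-order points, I would first show that $z_Y=0$ forces $\varphi\mapsto E_{\boldsymbol A}(\boldsymbol\theta_X,\boldsymbol\theta_Y+\varphi)$ to be constant. That is a flat direction not generated by global rotation, so the hypothesis excludes it.

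For local minima, I would argue as follows. If $z_Y=0$, then after any rigid rotation of $Y$ the configuration is still a local minimum, so the restrictions to $X$ and $Y$ are separately critical for $E_{\boldsymbol A[X]}$ and $E_{\boldsymbol A[Y]}$. I would then perturb $\boldsymbol\theta_Y$ along a direction that makes $z_Y$ nonzero with a chosen phase. Aligning that phase with $z_X$, or with the local field $\sum_i u_ie^{\mathrm i\theta_i}$ when $z_X=0$ too, strictly decreases the energy at second order. This contradicts local minimality.

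If this perturbation argument fails, the fallback is to pick a different tree edge. I would use a leaf edge $X=\{i\}$, where $z_X=u_ie^{\mathrm i\theta_i}\ne0$ automatically, to propagate alignment. Verifying this degenerate case cleanly is the step I expect to require the most care.
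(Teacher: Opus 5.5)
Your collapsing induction is a genuinely different route from the paper's. The paper roots the decomposition tree and merges synchronized sibling blocks through a three-set induction lemma. For non-flat first- and second-order points your route works. The identity $E_{\boldsymbol A'}(\boldsymbol\theta_X+\boldsymbol s,\arg z_Y+r)=E_{\boldsymbol A}(\boldsymbol\theta_X+\boldsymbol s,\boldsymbol\theta_Y+r\boldsymbol 1_Y)-E_{\boldsymbol A[Y]}(\boldsymbol\theta_Y)$ holds for all $\boldsymbol s,r$. Hence a flat direction $(\boldsymbol d_X,d_y)$ of $E_{\boldsymbol A'}$ lifts to the flat direction $(\boldsymbol d_X,d_y\boldsymbol 1_Y)$ of $E_{\boldsymbol A}$. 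This lifting, rather than the Hessian compression, is why non-flatness transfers. Non-flatness then excludes $z_Y=0$. Once $z_Y\neq0$, induction gives $z_X=u(X)e^{\mathrm i\arg z_Y}\neq0$, so the second collapse is legitimate too. Consequently the only degenerate situation is $z_X=z_Y=0$, and it matters only for local minima.

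That case is a genuine gap. You plan to exhibit a direction of strictly negative second variation, but none need exist, because $z_X=z_Y=0$ is compatible with \eqref{eq:foc} and \eqref{eq:soc}. Take $C_4$: every width-one rank-decomposition has the nonpendant cut $X=\{1,3\}$, $Y=\{2,4\}$, with $\boldsymbol u=\boldsymbol v=\boldsymbol 1$. The configuration $(0,\pi/2,\pi,3\pi/2)$ then has $z_X=z_Y=0$ and $\nabla^2E_{\boldsymbol A}=\boldsymbol 0$. Two further problems: when $z_X=0$, the local field $\sum_iu_ie^{\mathrm i\theta_i}$ you propose to align with \emph{is} $z_X$; and a leaf edge produces no smaller instance.

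Excluding this case needs more than second-order information at $\boldsymbol\theta$. The missing idea, used in Case~(b) of Lemma~\ref{lem:merge-zero}, is that every point $\boldsymbol\theta^t=\boldsymbol\theta+t\boldsymbol 1_Y$ of the flat path is itself a local minimum. Hence $\boldsymbol H(t)\succeq0$ for all small $t$, and this is combined with a synchronized block on the other side.

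Your framework can supply that block. Since $z_Y=0$, $\boldsymbol\theta_X$ is a local minimum of $E_{\boldsymbol A[X]}$. Apply induction to any connected component $C$ of the support of $\boldsymbol A[X]$; principal submatrices have rank-width at most one. This gives $\theta_i\equiv\phi_C$ on $C$, and $u(C)>0$ because $C$ must have an edge into $Y$. Since edges leaving $C$ go only to $Y$, $\boldsymbol 1_C^\top\boldsymbol H(t)\boldsymbol 1_C=\operatorname{Re}\bigl(z_C\overline{e^{\mathrm it}z_Y}\bigr)=0$. This forces $\boldsymbol H(t)\boldsymbol 1_C=\boldsymbol 0$. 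Its coordinate at $j\in Y$ with $v_j>0$ is $-u(C)v_j\cos(\theta_j+t-\phi_C)$, which cannot vanish for all small $t$. Without an argument of this kind, your proof establishes only the non-flat half of the theorem.
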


Here, a first- and second-order point $\boldsymbol{\theta}$ is a stationary point with
positive semidefinite Hessian of the energy function $E_{\boldsymbol{A}}(\boldsymbol{\theta})$. We call a configuration $\boldsymbol{\theta}$ \emph{non-flat}
if the only straight-line phase variations along which the energy is
locally constant are common rotations; see
Section~\ref{subsec:kuramoto} for the precise definition.

A symmetric entrywise nonnegative matrix $\boldsymbol A$ with zero
diagonal represents a weighted graph: vertices $i$ and $j$ are adjacent
if $A_{ij}>0$, and the edge $ij$ has weight $A_{ij}$.
Informally, $\boldsymbol A$ has rank-width at most one if there exists
a rank-decomposition tree such that the submatrix across each cut
induced by an edge of the tree has rank at most one.
We denote the rank-width over $\mathbb R$ by
$\operatorname{rw}_{\mathbb R}(\boldsymbol A)$.
This condition concerns the ranks of cut submatrices, rather than
the rank of $\boldsymbol A$ itself.
Our proof replaces a global rank-one factorization by rank-one
factorizations across cuts of a decomposition tree, along which
synchronized blocks can be merged.
We use the matrix rank-width framework of Oum~\cite{Oum2012}; see
Section~\ref{subsec:rank-width} for the precise definitions of a
rank-decomposition tree and rank-width.

When all entries of $\boldsymbol A$ belong to $\{0,1\}$,
it is the adjacency matrix of an unweighted simple graph.
In this setting, a graph with adjacency matrix that has rank-width at most one is exactly
distance-hereditary.
These form a well-studied class in structural and algorithmic graph
theory, with close connections to split decomposition and
rank-width~\cite{GioanPaul2012,Oum2005}.
Their definition is based on the preservation of distances:
a graph is \emph{distance-hereditary} if, in every connected induced
subgraph, the distance between any two vertices equals their distance
in the original graph, where distance is the length of a shortest path.
Oum~\cite{Oum2005} established the following characterization,
with cut-ranks computed over the two-element field $\mathbb F_2$.

\begin{theorem}[{\cite[Proposition~7.3]{Oum2005}}]
\label{thm:oum-distance-hereditary}
Let $G$ be a simple graph with adjacency matrix $\boldsymbol A_G$.
Then $G$ is distance-hereditary if and only if
\[
\operatorname{rw}_{\mathbb F_2}(\boldsymbol A_G)\leq1,
\]
where $\operatorname{rw}_{\mathbb F_2}$ denotes rank-width
over the two-element field $\mathbb F_2$.
\end{theorem}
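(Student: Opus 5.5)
I would prove the two implications separately. Both rest on the Bandelt–Mulder characterizations of distance-hereditary graphs.
\begin{enumerate}[(a)]
\item \emph{Constructive characterization.} A connected graph is distance-hereditary if and only if it can be built from a single vertex by repeatedly adding a pendant vertex, a false twin, or a true twin of an existing vertex. Since a disconnected graph is distance-hereditary exactly when its components are, and rank-width is the maximum over components, I may assume $G$ is connected.
\item \emph{Forbidden subgraphs.} A graph is distance-hereditary if and only if it has no induced house, domino, gem, or hole $C_n$ with $n\ge5$.
\end{enumerate}

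\emph{($\Rightarrow$)} I would induct along the construction sequence in (a) and maintain a subcubic rank-decomposition tree of width at most $1$ over $\mathbb F_2$. Suppose the new vertex $v$ is attached to an existing vertex $u$, as a pendant vertex or a twin. I replace the leaf $u$ by an internal node carrying two leaves, $u$ and $v$.

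For a cut $(X,Y)$ coming from an old tree edge, $u$ and $v$ lie on the same side, say $X$. Consider the row of $v$ in the cut submatrix $\boldsymbol A_G[X,Y]$, which has columns indexed by $Y$ only.
\begin{itemize}
\item If $v$ is a twin of $u$, this row equals the row of $u$. The diagonal discrepancy between true and false twins never appears, because $u,v\in X$.
\item If $v$ is pendant at $u$, this row is zero.
\end{itemize}
In either case the rank of the cut is unchanged. The three new cuts are $\{u\}$, $\{v\}$ and $\{u,v\}$ against the rest. The first two have rank at most $1$ trivially. For the third, the rows of $u$ and $v$ restricted to $V\setminus\{u,v\}$ are either equal or one of them is zero, so the rank is at most $1$. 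The base case of one or two vertices is immediate.

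\emph{($\Leftarrow$)} I would first show that $\operatorname{rw}_{\mathbb F_2}$ does not increase under taking induced subgraphs. Given a decomposition tree, delete the leaves outside the vertex subset and suppress the resulting degree-$2$ nodes. Every cut of the new tree is then a restriction of an old cut, and a submatrix has rank at most that of the full matrix. By (b), it then suffices to show that each forbidden graph $H$ has $\operatorname{rw}_{\mathbb F_2}(\boldsymbol A_H)\ge2$.

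Each such $H$ has at least $5$ vertices. In any subcubic tree with at least $4$ leaves, some edge induces a cut $(X,Y)$ with $|X|,|Y|\ge2$. So it is enough to prove that every such balanced cut of $H$ has $\mathbb F_2$-rank at least $2$. I would do this by exhibiting two vertices of $X$ whose neighbourhoods in $Y$ are distinct and both nonempty.

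For a hole $C_n$ with $n\ge5$, I would split into two cases:
\begin{itemize}
\item If $X$ is a single arc, its two endpoints have distinct neighbours in $Y$. These neighbours differ because $|Y|\ge2$, and the two endpoints are distinct because $|X|\ge2$.
\item If $X$ consists of several arcs, choose endpoints of two different arcs. Their neighbours in $Y$ are again distinct, since $n\ge5$ forces one side to have size at least $3$.
\end{itemize}

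The house, domino and gem have $5$ or $6$ vertices, so for them I would check the finitely many balanced bipartitions, using symmetry to reduce the work.

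I expect this last finite verification to be the main obstacle, especially for the gem. Its dominating vertex makes many rows in a cut coincide, so there is a genuine risk that some balanced cut has rank only $1$. If that happens, the argument still needs every tree to have \emph{some} bad edge, not every balanced cut to be bad. I would then instead enumerate the few subcubic tree shapes on $5$ leaves and check, for every placement of the gem's vertices on the leaves, that some edge yields a rank-$2$ cut.
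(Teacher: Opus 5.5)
The paper gives no proof of this statement; it quotes it from Oum. Only the forward direction is used, in Corollary~\ref{cor:distance-hereditary}. So there is nothing internal to compare with, and your argument via Bandelt--Mulder is a self-contained alternative. It is essentially correct.

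Your $(\Rightarrow)$ argument is complete as written. It uses only that duplicating a row or adding a zero row preserves rank, so it works over any field. In particular it gives $\operatorname{rw}_{\mathbb R}(\boldsymbol A_G)\le1$ directly, which is exactly what the paper needs, without the $0/1$ rank transfer in the proof of that corollary.

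In $(\Leftarrow)$, three points need repair or clarification.

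\begin{enumerate}[(i)]
\item \emph{Holes, multi-arc case.} Your justification is false as stated. In $C_6$ with $X=\{1,2,4,5\}$, the endpoints $2$ and $4$ lie in different arcs, yet both see only vertex $3$ in $Y$. A correct argument:
\begin{itemize}
\item If some arc of $X$ has at least two vertices, take its two endpoints. Each has exactly one neighbour in $Y$, namely the two vertices flanking the arc. These are distinct precisely because $|Y|\ge2$. This case also covers your single-arc case.
\item If every arc is a singleton, any two vertices of $X$ have their whole two-element neighbourhoods in $Y$. These differ because two vertices of $C_n$, $n\ge5$, share at most one neighbour.
\end{itemize}

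\item \emph{The gem.} Your worry is unfounded, and the fallback enumeration of tree shapes is unnecessary. Every cut with sides of sizes $2$ and $3$ has rank $2$.
\begin{itemize}
\item If the dominating vertex $z$ lies in $X=\{z,x\}$, its row is all ones on $Y$. Meanwhile $x$ sees exactly one or two of the three path vertices in $Y$.
\item If $z\in Y$, both rows contain $z$. No two vertices of $P_4$ have the same neighbours among the other two path vertices, so the rows differ.
\end{itemize}
The house and domino check out the same way. Indeed, none of the four forbidden graphs has a rank-one cut with both sides of size at least two.

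\item \emph{Heredity argument.} When restricting the tree to a vertex subset, you must also delete internal nodes that become unlabelled leaves before suppressing degree-$2$ nodes.
\end{enumerate}

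Finally, you can avoid the forbidden-subgraph characterization and all case checks. Take a width-one tree for $G$ with $|V(G)|\ge3$.
\begin{itemize}
\item Some internal node is adjacent to two leaves $u,v$. Its third edge induces the cut $(\{u,v\},V\setminus\{u,v\})$.
\item Over $\mathbb F_2$, rank at most one for this two-row matrix means the rows are equal or one vanishes.
\item Hence $v$ is a true or false twin of $u$, a pendant vertex at $u$, or isolated.
\item $G-v$ again has rank-width at most one by your heredity lemma.
\item Induction then finishes the proof, using that these operations and adding an isolated vertex preserve distance-heredity. This is the easy half of (a).
\end{itemize}
So the whole equivalence rests on characterization (a) alone.
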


Combining Theorems~\ref{main} and~\ref{thm:oum-distance-hereditary}
gives the following consequence.

\begin{corollary}\label{cor:distance-hereditary}
Let $\boldsymbol A_G$ be the adjacency matrix of a connected
distance-hereditary graph $G$. Every local minimum and every non-flat
first- and second-order point of $E_{\boldsymbol A_G}(\boldsymbol\theta)$ is synchronized.
\end{corollary}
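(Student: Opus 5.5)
The plan is to deduce the corollary directly from Theorem~\ref{main}, using Theorem~\ref{thm:oum-distance-hereditary} to supply the rank-width hypothesis. The only real gap is that Oum's characterization computes cut-ranks over $\mathbb F_2$, whereas Theorem~\ref{main} asks for rank-width one over $\mathbb R$. So the key step is to show that, for $\{0,1\}$-matrices, a cut submatrix has rank at most one over $\mathbb F_2$ if and only if it has rank at most one over $\mathbb R$. Once that holds, the same rank-decomposition tree certifies $\operatorname{rw}_{\mathbb R}(\boldsymbol A_G)\le 1$.

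First I will check the standing hypotheses of Theorem~\ref{main} for $\boldsymbol A_G$. Since $G$ is simple and undirected, $\boldsymbol A_G$ is symmetric, has entries in $\{0,1\}$ (so it is entrywise nonnegative), and has zero diagonal. Its support graph is $G$ itself, which is connected by assumption.

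Next, by Theorem~\ref{thm:oum-distance-hereditary}, there is a rank-decomposition tree such that, for every tree edge, the cut submatrix $\boldsymbol M=\boldsymbol A_G[X,V\setminus X]$ has $\mathbb F_2$-rank at most one. I then compare ranks for an arbitrary $\{0,1\}$-matrix $\boldsymbol M$.
\begin{itemize}
\item Rank zero means $\boldsymbol M=0$, and this is the same statement over both fields.
\item Over $\mathbb F_2$, rank one means every nonzero row equals one fixed nonzero vector $\boldsymbol v\in\{0,1\}^m$, because the only nonzero scalar is $1$. Such a matrix is $\boldsymbol u\boldsymbol v^{\top}$ with $\boldsymbol u\in\{0,1\}$, which visibly has real rank one.
\item Conversely, suppose $\boldsymbol M$ has real rank one. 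Then every nonzero row is a real multiple $c\boldsymbol v$ of a fixed nonzero $\{0,1\}$-row $\boldsymbol v$. Comparing any coordinate where $v_k=1$ forces $c\in\{0,1\}$, so $\mathbb F_2$-rank is one as well.
\end{itemize}
Hence the cut-rank functions over $\mathbb F_2$ and $\mathbb R$ agree whenever one of them is at most one. The same tree therefore has width at most one over $\mathbb R$, so $\operatorname{rw}_{\mathbb R}(\boldsymbol A_G)\le 1$.

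For degenerate cases where rank-width is defined by convention (for example $n\le 1$, where there are no tree edges), the conclusion of the corollary is trivial anyway. Applying Theorem~\ref{main} then gives that every local minimum and every non-flat first- and second-order point is synchronized. I expect the only step needing care to be the field comparison above. The point to make explicit there is that it relies on the entries being $0/1$: for general weights, $\mathbb F_2$-rank would not even be meaningful.
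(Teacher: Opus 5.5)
Your proposal is correct and follows the same route as the paper: check the hypotheses of Theorem~\ref{main}, show that a $0/1$ matrix has real rank at most one if and only if it has $\mathbb F_2$-rank at most one, and use this to transfer Oum's characterization to $\operatorname{rw}_{\mathbb R}(\boldsymbol A_G)\le 1$. The one detail you leave implicit, which the paper states explicitly, is that Theorem~\ref{main} assumes rank-width exactly one; you should add that for connected $G$ with at least two vertices every cut matrix is nonzero, so $\operatorname{rw}_{\mathbb R}(\boldsymbol A_G)\ge 1$.
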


\begin{proof}
Let $G$ be a connected graph with adjacency matrix $\boldsymbol A_G$.
The single-vertex case is immediate, so assume that $G$ has at least
two vertices.
Theorem~\ref{main} gives the conclusion whenever
$\operatorname{rw}_{\mathbb R}(\boldsymbol A_G)=1$.
For any $0/1$ matrix $\boldsymbol M$,
$\operatorname{rank}_{\mathbb R}(\boldsymbol M)\leq1$ if and only if
$\operatorname{rank}_{\mathbb F_2}(\boldsymbol M)\leq1$.
Hence
\[
\operatorname{rw}_{\mathbb R}(\boldsymbol A_G)\leq1
\quad\Longleftrightarrow\quad
\operatorname{rw}_{\mathbb F_2}(\boldsymbol A_G)\leq1.
\]
Since $G$ is connected and has at least two vertices,
both rank-widths are nonzero.
Thus the condition $\operatorname{rw}_{\mathbb R}(\boldsymbol A_G)=1$
is equivalent, by Theorem~\ref{thm:oum-distance-hereditary},
to $G$ being distance-hereditary.
\end{proof}

In the unweighted setting, the rank-width bound underlying
Corollary~\ref{cor:distance-hereditary} is sharp.
Every unweighted cycle $C_n$, $n\geq5$, has real rank-width two and
admits a nonsynchronized configuration that is both a local minimum
and a non-flat first- and second-order point
(Proposition~\ref{prop:cycles}). In the weighted setting,
Example~\ref{ex:weighted-k5} establishes sharpness of both conclusions
of Theorem~\ref{main}: a positively weighted $K_5$ has a coupling matrix
of real rank-width two and admits a nonsynchronized configuration
with both properties.

Cornelsen and Di Stefano~\cite{CornelsenDiStefano2009} proved that the
underlying undirected graph of the transitive closure of any directed
tree is distance-hereditary. Combining this fact with
Corollary~\ref{cor:distance-hereditary} yields the following consequence.

\begin{corollary}\label{cor:tree-transitive-closure}
Let $G$ be the underlying undirected graph of the transitive closure
of a directed tree, and let $\boldsymbol A_G$ be its adjacency matrix.
Every local minimum of $E_{\boldsymbol A_G}(\boldsymbol\theta)$ is a fully synchronized state.
The same conclusion also holds for every non-flat first- and second-order point.
\end{corollary}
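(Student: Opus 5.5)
The plan is to reduce Corollary~\ref{cor:tree-transitive-closure} directly to Corollary~\ref{cor:distance-hereditary}. That corollary already covers local minima and non-flat first- and second-order points, so it suffices to check two facts about $G$: it is connected, and it is distance-hereditary.

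\textbf{Step 1 (distance-hereditary).} Let $T$ be a directed tree, i.e.\ an orientation of a tree, and let $\overline{T}$ be its transitive closure. Let $G$ be the underlying undirected graph of $\overline{T}$. By the result of Cornelsen and Di Stefano~\cite{CornelsenDiStefano2009}, $G$ is distance-hereditary. No extra work is needed here beyond checking that their notion of ``directed tree'' matches the one in the statement. If their result were stated only for rooted (out-)trees, I would either restrict the statement to that setting or note that their argument covers arbitrary orientations.

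\textbf{Step 2 (connectivity).} Every arc $u\to v$ of $T$ lies in $\overline{T}$. Hence the underlying undirected tree of $T$ is a spanning subgraph of $G$. Since a tree is connected, $G$ is connected.

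\textbf{Step 3 (apply the earlier corollary).} The adjacency matrix $\boldsymbol A_G$ is a symmetric $0/1$ matrix with zero diagonal. The transitive closure of an acyclic orientation creates no loops, and we take the underlying \emph{simple} graph. Corollary~\ref{cor:distance-hereditary} therefore applies and yields both conclusions.

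The only point that needs care is Step~1's dependence on the precise hypotheses of~\cite{CornelsenDiStefano2009}, in particular whether it covers all orientations of a tree. Everything else is immediate.
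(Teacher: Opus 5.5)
Your proposal is correct and matches the paper's proof: cite Cornelsen--Di Stefano for distance-heredity, note that $G$ is connected because the underlying tree is a spanning subgraph, and apply Corollary~\ref{cor:distance-hereditary}. Your worry about orientations does not arise. The paper quotes the result for any directed tree, and every orientation of a tree has that tree as the cover graph of its transitive closure, since each arc is the only directed path between its endpoints.
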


\begin{proof}
The graph $G$ is connected and distance-hereditary, so
Corollary~\ref{cor:distance-hereditary} applies.
\end{proof}

\subsection{Synchronization properties and terminology}
\label{subsec:sync-properties}
Different notions of global synchronization appear in the literature.
In this section, we provide an overview of these notions and their relationships.

\begin{paperdefinition}[Almost-global synchronization, AGS]
\label{def:ags}
A connected positively weighted graph $G=(V,E)$ is \emph{almost globally
synchronizing} if, for almost every initial phase configuration in
$(\mathbb R/2\pi\mathbb Z)^V$ with respect to Lebesgue measure, the solution
$\boldsymbol\theta(t)$ of \eqref{kuramoto} converges to a fully synchronized
state. That is,
\[
\lim_{t\to\infty}\boldsymbol\theta(t)=\boldsymbol\theta^*,
\qquad
\theta_i^*\equiv\theta_j^*\pmod{2\pi}
\quad\text{for all }i,j=1,\ldots,n.
\]
\end{paperdefinition}

This dynamical property is called \emph{almost global synchronization}
in \cite{CanaleMonzon2007,CanaleMonzonRobledo2009}. It is called
\emph{global synchronization} (GS) in
\cite[Definition~1.1]{WuBrandesSkeletons},
\cite[Definition~1.1]{AbdallaEtAl2026}, and
\cite[Introduction]{JainMizgerdSawhney2026}.
We distinguish AGS from the following energy-landscape properties:
\begin{enumerate}[label=(\roman*),leftmargin=*,itemsep=2pt]
\item \textbf{Local-minimum GS:} Every local minimum of $E_{\boldsymbol{A}}(\boldsymbol{\theta})$ is a fully synchronized state.
\item \textbf{Second-order GS:} Every first- and second-order point of $E_{\boldsymbol{A}}(\boldsymbol{\theta})$ is a fully synchronized state.
\item \textbf{Non-flat second-order GS:} Every non-flat first- and second-order point of $E_{\boldsymbol{A}}(\boldsymbol{\theta})$ is a fully synchronized state.
\end{enumerate}
\begin{figure}[H]
\centering
\begin{tikzpicture}[
  property/.style={draw,rounded corners=3pt,align=center,
    font=\small\bfseries,inner sep=7pt},
  implication/.style={-{Stealth[length=2mm]},semithick},
  unresolved/.style={implication,dashed}]
\node[property] (second) at (0,3.0) {(ii) Second-order GS};
\node[property] (local) at (-3.4,1.2) {(i) Local-minimum GS};
\node[property] (nonflat) at (3.4,1.2) {(iii) Non-flat\\second-order GS};
\node[property] (ags) at (0,0) {AGS};
\draw[implication] ([xshift=21pt]second.south west) -- ([xshift=9pt]local.north);
\draw[implication] ([xshift=-9pt]local.north) --
  coordinate[midway] (slashleft) ([xshift=3pt]second.south west);
\draw[implication] ([xshift=-21pt]second.south east) -- ([xshift=-9pt]nonflat.north);
\draw[implication] ([xshift=9pt]nonflat.north) --
  coordinate[midway] (slashright) ([xshift=-3pt]second.south east);
\draw[implication] ([xshift=-5pt]second.south) -- ([xshift=-5pt]ags.north);
\draw[implication] ([xshift=5pt]ags.north) --
  coordinate[midway] (slashcenter) ([xshift=5pt]second.south);
\draw[semithick,arrows=-] ([xshift=-2pt,yshift=4pt]slashleft) --
  ([xshift=2pt,yshift=-4pt]slashleft);
\draw[semithick,arrows=-] ([xshift=-2pt,yshift=-4pt]slashright) --
  ([xshift=2pt,yshift=4pt]slashright);
\draw[semithick,arrows=-] ([xshift=-4pt,yshift=3pt]slashcenter) --
  ([xshift=4pt,yshift=-3pt]slashcenter);
\draw[unresolved] (local) -- (ags);
\draw[unresolved] (nonflat) -- (ags);
\node[font=\small] at (-0.72,0.55) {?};
\node[font=\small] at (0.72,0.55) {?};
\end{tikzpicture}
\caption{Relations between synchronization properties for connected
positively weighted graphs. AGS denotes almost-global synchronization.
Solid arrows without a slash denote known
implications; slashed arrows denote implications that do not hold, with
$C_4$ providing a counterexample to all three
\cite[Section~2]{CanaleMonzonRobledo2009}.
Dashed arrows with question
marks indicate implications not established here.}
\label{fig:sync-properties}
\end{figure}

Property (ii) implies (i), since every local minimum satisfies the
first- and second-order conditions, and it implies (iii) directly.
It also implies AGS: real analyticity gives convergence of trajectories
to equilibria, and every nonsynchronized equilibrium then has a negative
Hessian eigenvalue. A center-stable manifold argument shows that the set
of initial conditions converging to these equilibria has measure zero;
see \cite[Introduction]{WuBrandesSkeletons} and the references therein.
Local minima may have flat directions, whereas non-flat first- and
second-order points need not be local minima; these observations alone
do not give an implication between (i) and (iii).

Lepsveridze and Zhang~\cite[Remark~1]{LepsveridzeZhang2026} adopt
property (i) as their definition of global synchronization in their
work on the density conjecture, explicitly distinguishing it from
AGS.
Other results establishing property (i) include
\cite{LingXuBandeira,LuSteinerberger2020}.
Property (ii) follows from the minimum-degree criteria
\cite{Taylor2012,LingXuBandeira,LuSteinerberger2020,KassabovStrogatzTownsend2021},
spectral and random-graph criteria
\cite{KassabovStrogatzTownsend2022,AbdallaEtAl2026,JainMizgerdSawhney2026,AbdallaBandeiraInvernizzi2024,FernandezZhu2026},
weighted landscape criteria
\cite{BandeiraBoumalVoroninski2016,RakotoEndorWaldspurger2025,McRae2026},
and the results on threshold graphs and quasi-threshold graphs \cite{WuBrandesThreshold,WuBrandesSkeletons}.

For the classes considered here, Theorem~\ref{main} and
Corollary~\ref{cor:distance-hereditary} establish (i) and (iii), but not
(ii). We do not establish AGS from either (i) or (iii) alone.

\subsection{Related work}
\label{subsec:related-work}
For unweighted graphs, a central question is the critical minimum-degree
threshold $\mu_c(n-1)$ for local-minimum GS.
Earlier upper bounds on $\mu_c$~\cite{Taylor2012,LingXuBandeira,LuSteinerberger2020}
were improved to $3/4$~\cite{KassabovStrogatzTownsend2021}.
Lepsveridze and Zhang~\cite{LepsveridzeZhang2026} recently lowered this bound
strictly below $3/4$, refuting the sharpness conjecture
in~\cite[Conjecture~5]{BandeiraEtAl2025Randomstrasse}.
Lower bounds from counterexample
constructions~\cite{TownsendStillmanStrogatz2020,YonedaTatsukawaTeramae2021}
were improved to $11/16$~\cite{Canale2022} and most recently to
$0.6929077063\ldots$~\cite{OKeeffe2026}.
The exact value of the local-minimum GS threshold remains open.

Expansion criteria offer another approach.
Abdalla et al.~\cite{AbdallaEtAl2026} proved that a $d$-regular
graph with adjacency matrix $\boldsymbol A$ is almost globally
synchronizing whenever
\[
\left\|\boldsymbol A-\frac{d}{n}
\boldsymbol 1\boldsymbol 1^\top\right\|_{\mathrm{op}}
\leq \alpha d,
\qquad \alpha\leq0.0816.
\]

Rakoto Endor and Waldspurger~\cite{RakotoEndorWaldspurger2025}
and McRae~\cite{McRae2026} established synchronization criteria
based on Laplacian condition numbers, involving the ordinary
Laplacian $\boldsymbol L$ and the normalized Laplacian $\mathcal L$.
For a connected graph, second-order GS holds if
\[
\frac{\lambda_n(\boldsymbol L)}{\lambda_2(\boldsymbol L)}<2
\qquad\text{or}\qquad
\frac{\lambda_n(\mathcal L)}{\lambda_2(\mathcal L)}<2,
\]
where eigenvalues are ordered increasingly.


A common starting point for many of the preceding results is the complete
graph, which satisfies both second-order GS and local-minimum GS.
Since diagonal entries only add a constant to $E_{\boldsymbol A}$,
its adjacency matrix $\boldsymbol 1\boldsymbol 1^\top-\boldsymbol I$
can be replaced by the rank-one matrix $\boldsymbol 1\boldsymbol 1^\top$
without changing the landscape geometry.
Minimum-degree and spectral conditions can then be interpreted as
controlling departures from this rank-one benchmark~\cite[Section~1.1]{WuBrandesSkeletons}.

This perspective leaves a broader structural question: which graphs
ensure local-minimum or second-order GS beyond the rank-one benchmark
and the regimes covered by these criteria?
Recent progress on this question has identified structural mechanisms
for synchronization that do not require density, expansion, or
spectral assumptions.
Wu and Brandes~\cite{WuBrandesThreshold} proved that every connected
threshold graph is second-order globally synchronizing by exploiting
geometric relations between vertices introduced along the graph's
construction sequence.
For the larger class of connected quasi-threshold graphs,
they established second-order GS through a different structural
argument based on the underlying rooted-tree
skeleton~\cite{WuBrandesSkeletons}.
This argument propagates synchronization from the leaves toward
the root, without relying on a construction sequence.

Our work differs from~\cite{WuBrandesThreshold,WuBrandesSkeletons}
in two respects. First, our proof reveals a mechanism of GS distinct
from those based on construction sequences or graph skeletons:
simple cuts induce a simple energy landscape. Specifically, if a
connected positively weighted graph admits a tree-like decomposition
in which the coupling submatrix across each induced cut has rank one,
then the associated energy has no spurious local minima.
Second, the results in~\cite{WuBrandesThreshold,WuBrandesSkeletons}
establish second-order GS, whereas our main conclusion concerns
local-minimum GS. The stronger second-order GS requirement does not hold
for all distance-hereditary graphs.
For example, the four-cycle $C_4$ is distance-hereditary,
and the configuration $(0,\pi/2,\pi,3\pi/2)$, with vertices labeled
cyclically, has zero gradient and zero Hessian. It is therefore a
nonsynchronized first- and second-order point.
Nevertheless, Corollary~\ref{cor:distance-hereditary} shows that every local
minimum is a fully synchronized state for every connected
distance-hereditary graph.
In addition, our result excludes nonsynchronized non-flat first- and
second-order points.

Existing results also address global synchronization on weighted
graphs, including \cite[Theorem~3.4]{McRae2026} and the weighted
extension of \cite[Theorem~1.4]{AbdallaEtAl2026}.
These results do not cover the full class considered here:
real rank-width one constrains the ranks of cut submatrices
but does not imply the required spectral bounds.
Theorem~\ref{main} broadens the rank-one benchmark for local-minimum GS
to real rank-width one.


\subsection{Organization}
\label{subsec:organization}
Section~\ref{sec:setup} introduces the first- and second-order
conditions for the Kuramoto energy function~\eqref{E}.
In particular, we define \textit{non-flatness}, a key concept
in Theorem~\ref{main}.
We then introduce matrix rank-width over $\mathbb R$,
specializing Oum's definition~\cite{Oum2012} to the real field.
Finally, we derive geometric consequences of the stationary
conditions across rank-one cuts.
Section~\ref{sec:proof} proves Theorem~\ref{main}.
The key step is an induction lemma that allows synchronized
vertex sets to be merged.
We then apply this lemma recursively along a rank-decomposition
tree to complete the proof.
Section~\ref{sec:examples} establishes sharpness in the
unweighted and weighted settings using cycles and a positively
weighted complete graph, respectively.

\Needspace{8\baselineskip}
\section{Preliminaries}
\label{sec:setup}
\subsection{Notation}
\label{subsec:notation}
\leavevmode\par
\par\medskip
\noindent\textbf{Sets and vectors.}\par\nobreak
\smallskip
\noindent
For a finite set $W$, let $\mathbb R^W$ denote the space of
real vectors indexed by $W$. For $S\subseteq V$, write
$S^c:=V\setminus S$.
For $\boldsymbol p\in\mathbb R^W$ and $U\subseteq W$, define
\[
\operatorname{supp}(\boldsymbol p):=\{i\in W:p_i\neq0\},
\qquad
\boldsymbol p_U:=(p_i)_{i\in U},
\qquad
p(U):=\sum_{i\in U}p_i.
\]
Let $\boldsymbol 1_U\in\{0,1\}^W$ be the indicator vector
of $U$, with ambient set understood from context.
We write $\boldsymbol 1$ and $\boldsymbol 0$ for the all-ones
and zero vectors of the appropriate dimension, and
$\boldsymbol 0_W$ for the zero vector indexed by $W$.
We use $\langle\cdot,\cdot\rangle$ and $|\cdot|$ for the
Euclidean inner product and norm, respectively, and write
$\boldsymbol 1^\perp
=\{\boldsymbol x:\langle\boldsymbol x,\boldsymbol 1\rangle=0\}$.

\par\medskip
\noindent\textbf{Matrices and graphs.}\par\nobreak
\smallskip
\noindent
For $\boldsymbol A\in\mathbb R^{V\times V}$ and
$U,W\subseteq V$, let $\boldsymbol A[U,W]$ denote the
submatrix with row set $U$ and column set $W$.
We write $\mathcal S_0^n$ for the space of real symmetric
$n\times n$ matrices with zero diagonal.
Throughout this work, the coupling matrix $\boldsymbol A$
belongs to $\mathcal S_0^n$.
Its support graph $G_{\boldsymbol A}$ has vertex set $V$,
with distinct vertices $i,j$ adjacent if and only if $A_{ij}\neq0$.
For a real symmetric matrix $\boldsymbol M$, we write
$\boldsymbol M\succeq0$ if $\boldsymbol M$ is positive semidefinite.
Matrix ranks are taken over $\mathbb R$ unless another
field is specified.

\par\medskip
\noindent\textbf{Phase vectors.}\par\nobreak
\smallskip
\noindent
We write $\mathbb S^1$ for the unit circle in $\mathbb R^2$
and associate with each phase $\theta_i\in\mathbb R$ the vector
$\boldsymbol v_i=(\cos\theta_i,\sin\theta_i)\in\mathbb S^1$.

\subsection{First- and second-order conditions and flat directions}
\label{subsec:kuramoto}
We recall the first- and second-order conditions for the energy
\eqref{E}. Throughout this subsection, we assume only that
$\boldsymbol A\in\mathcal S_0^n$; any additional nonnegativity
assumptions will be stated explicitly.

\noindent
\textbf{First-order condition.}
A point $\boldsymbol\theta$ is called a \emph{first-order point} of $E_{\boldsymbol A}(\boldsymbol\theta)$ if
\begin{equation*}\tag{FOC}\label{eq:foc}
\nabla E_{\boldsymbol A}(\boldsymbol\theta)=\boldsymbol 0,
\qquad \text{i.e.}\qquad
\sum_{j=1}^n A_{ij}\sin(\theta_i-\theta_j)=0,
\quad i=1,\ldots,n.
\end{equation*}
\noindent
\textbf{Second-order condition.}
A point $\boldsymbol\theta$ is called a \emph{second-order point} of $E_{\boldsymbol A}(\boldsymbol\theta)$ if its Hessian
$\boldsymbol H=\nabla^2E_{\boldsymbol A}(\boldsymbol\theta)$ satisfies
\begin{equation*}\tag{SOC}\label{eq:soc}
\boldsymbol H\succeq0,
\end{equation*}
where
\begin{equation}\label{hessian}
H_{ij}
=
\begin{cases}
\displaystyle
\sum_{k=1}^n
A_{ik}\cos(\theta_i-\theta_k),
& i=j,\\[1ex]
-A_{ij}\cos(\theta_i-\theta_j),
& i\neq j.
\end{cases}
\end{equation}

A point satisfying both conditions \eqref{eq:foc} and \eqref{eq:soc} is called a \emph{first- and
second-order point} of $E_{\boldsymbol A}(\boldsymbol\theta)$.
Clearly, every local minimum of $E_{\boldsymbol A}(\boldsymbol\theta)$ satisfies both conditions, as every local minimum is a first- and second-order point.

\begin{definition}
Let $\boldsymbol\theta\in\mathbb R^n$. A nonzero vector
$\boldsymbol d\in\mathbb R^n$ is a \emph{flat direction} of
$E_{\boldsymbol A}(\boldsymbol\theta)$ if
\[
E_{\boldsymbol A}(\boldsymbol\theta+t\boldsymbol d)
=E_{\boldsymbol A}(\boldsymbol\theta)
\qquad\text{for all sufficiently small }t\in\mathbb R.
\]
\end{definition}

We make the following observations.
\begin{enumerate}[label=(\arabic*),leftmargin=*,itemsep=6pt]
\item Every nonzero multiple $\alpha\boldsymbol 1$ is a flat direction,
corresponding to a common rotation of all phases.
We call such directions \emph{trivial}; all other flat directions
are called \emph{nontrivial}.
A point is called \emph{non-flat} if it has no nontrivial flat direction.

\item If $\boldsymbol\theta$ is a first- and second-order point of
$E_{\boldsymbol A}(\boldsymbol\theta)$ and $\boldsymbol d$ is a flat
direction at $\boldsymbol\theta$, then
\[
\nabla^2E_{\boldsymbol A}(\boldsymbol\theta)\boldsymbol d=\boldsymbol 0,
\qquad\text{equivalently,}\qquad
\boldsymbol d\in\ker\nabla^2E_{\boldsymbol A}(\boldsymbol\theta).
\]
\end{enumerate}

\begin{proof}[Proof of (2)]
Write $\boldsymbol H=\nabla^2E_{\boldsymbol A}(\boldsymbol\theta)$.
For a fixed flat direction $\boldsymbol d$, Taylor expansion at
$\boldsymbol\theta$ gives
\[
\begin{aligned}
0&=E_{\boldsymbol A}(\boldsymbol\theta+t\boldsymbol d)
  -E_{\boldsymbol A}(\boldsymbol\theta)\\
 &=t\,\nabla E_{\boldsymbol A}(\boldsymbol\theta)^\top\boldsymbol d
 +\frac{t^2}{2}\boldsymbol d^\top\boldsymbol H\boldsymbol d+t^3r(t).
\end{aligned}
\]
Dividing by $t^2$ for $t\neq0$, we obtain
\[
0=\frac{1}{t}\,\nabla E_{\boldsymbol A}(\boldsymbol\theta)^\top\boldsymbol d+\frac12\boldsymbol d^\top\boldsymbol H\boldsymbol d+t\,r(t).
\]
The first-order condition makes the first
 term zero.
Since $r(t)$ is bounded near zero, we have $t\,r(t)\to0$ as
$t\to0$. Taking limits in the preceding equality therefore gives
$\boldsymbol d^\top\boldsymbol H\boldsymbol d=0$.
Since $\boldsymbol H\succeq0$, we may write
$\boldsymbol H=\boldsymbol B^\top\boldsymbol B$.
Then $\|\boldsymbol B\boldsymbol d\|^2
=\boldsymbol d^\top\boldsymbol H\boldsymbol d=0$,
so $\boldsymbol B\boldsymbol d=\boldsymbol 0$ and hence
$\boldsymbol H\boldsymbol d=\boldsymbol 0$.\end{proof}

Since $\boldsymbol H\boldsymbol 1=\boldsymbol 0$, observation (2)
gives the following sufficient condition for non-flatness.
\begin{lemma}[A sufficient condition for non-flatness]\label{lem:flat-hessian}
Let $\boldsymbol\theta$ be a first- and second-order point of
$E_{\boldsymbol A}(\boldsymbol\theta)$, and let
$\boldsymbol H=\nabla^2E_{\boldsymbol A}(\boldsymbol\theta)$.
If $\operatorname{rank}\boldsymbol H=n-1$, or equivalently
$\ker\boldsymbol H=\mathbb R\boldsymbol 1$, then
$\boldsymbol\theta$ is non-flat.
\end{lemma}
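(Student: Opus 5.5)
The plan is to argue by contraposition, using observation~(2) as the only substantive input. Suppose $\boldsymbol\theta$ is a first- and second-order point with $\ker\boldsymbol H=\mathbb R\boldsymbol 1$, and suppose for contradiction that $\boldsymbol d$ is a flat direction at $\boldsymbol\theta$. By observation~(2), the first-order condition, the positive semidefiniteness of $\boldsymbol H$, and the flatness of $\boldsymbol d$ together force $\boldsymbol H\boldsymbol d=\boldsymbol 0$. Hence $\boldsymbol d\in\ker\boldsymbol H=\mathbb R\boldsymbol 1$. Since $\boldsymbol d\neq\boldsymbol 0$ by definition of a flat direction, we get $\boldsymbol d=\alpha\boldsymbol 1$ with $\alpha\neq0$. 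Such a direction is trivial, so every flat direction is trivial and $\boldsymbol\theta$ is non-flat.

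The remaining task is to justify the stated equivalence between $\operatorname{rank}\boldsymbol H=n-1$ and $\ker\boldsymbol H=\mathbb R\boldsymbol 1$.

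First, I will check from \eqref{hessian} that every row of $\boldsymbol H$ sums to zero. The diagonal entry $\sum_k A_{ik}\cos(\theta_i-\theta_k)$ cancels the off-diagonal entries $-A_{ij}\cos(\theta_i-\theta_j)$, because $A_{ii}=0$. So $\boldsymbol H\boldsymbol 1=\boldsymbol 0$, and therefore $\mathbb R\boldsymbol 1\subseteq\ker\boldsymbol H$ always holds.

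By rank--nullity, $\operatorname{rank}\boldsymbol H=n-1$ is the same as $\dim\ker\boldsymbol H=1$. Given the inclusion just established, this is the same as $\ker\boldsymbol H=\mathbb R\boldsymbol 1$.

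No step here is a real obstacle. The only points needing care are the zero-diagonal assumption, which is what makes $\boldsymbol H\boldsymbol 1=\boldsymbol 0$, and the fact that observation~(2) requires both \eqref{eq:foc} and \eqref{eq:soc}. Both are among the hypotheses of the lemma.
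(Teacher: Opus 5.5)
Your proof is correct and follows the paper's argument: observation (2) puts every flat direction in $\ker\boldsymbol H=\mathbb R\boldsymbol 1$, so every flat direction is trivial. Your check of the equivalence via $\boldsymbol H\boldsymbol 1=\boldsymbol 0$ and rank--nullity spells out what the paper only states in the sentence before the lemma; also, the argument is direct, so you do not need the contraposition or contradiction framing.
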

\begin{proof}
By observation (2), every flat direction belongs to
$\ker\boldsymbol H=\mathbb R\boldsymbol 1$, so no flat direction is nontrivial.
\end{proof}

\subsection{Rank-width}
\label{subsec:rank-width}
We specialize Oum's definition of rank-width for symmetric matrices over
an arbitrary field \cite{Oum2012} to $\mathbb R$.

Throughout this paper, we use subcubic trees
$T=(V(T),E(T))$ whose vertices have degree one or three. The degree-one vertices are called \emph{leaves}, and
their set is denoted by $\mathcal L(T)$; the degree-three vertices are
called \emph{internal vertices}.
An edge incident to a leaf is called a \emph{pendant edge};
all other edges are called \emph{nonpendant edges}.
Each edge $e\in E(T)$ partitions $\mathcal L(T)$ into two nonempty
sets, given by the two components of $T-e$.
We call this partition the \emph{cut induced by $e$}.

\begin{definition}[Rank-width over $\mathbb R$]
Let $\boldsymbol A\in\mathcal S_0^n$, with index set $V=\{1,\ldots,n\}$.
\begin{itemize}[label=\textbullet,leftmargin=*,itemsep=3pt]
\item The \emph{cut-rank} of $S\subseteq V$ is
\[
\rho_{\boldsymbol A}(S)
:=\operatorname{rank}_{\mathbb R}\bigl(\boldsymbol A[S,S^c]\bigr).
\]
Since $\boldsymbol A$ is symmetric,
$\rho_{\boldsymbol A}(S)=\rho_{\boldsymbol A}(S^c)$.
\item For $n\geq2$, a \emph{rank-decomposition} of $\boldsymbol A$ is
 a pair $(T,L)$, where $T$ is a subcubic tree and the map
 \mbox{$L:V\to\mathcal L(T)$} is a bijection.
Through $L$, each edge $e$ induces a cut $(S_e,S_e^c)$ of $V$,
where $S_e$ consists of the labels of the leaves in either component of $T-e$.
\item The \emph{width} of a rank-decomposition $(T,L)$ is
\[
\max_{e\in E(T)}\rho_{\boldsymbol A}(S_e).
\]
\item The \emph{rank-width} of $\boldsymbol A$ over $\mathbb R$ is the
minimum width of a rank-decomposition of $\boldsymbol A$:
\[
\operatorname{rw}_{\mathbb R}(\boldsymbol A)
:=\min_{(T,L)}\max_{e\in E(T)}\rho_{\boldsymbol A}(S_e).
\]
For $n=1$, we set $\operatorname{rw}_{\mathbb R}(\boldsymbol A):=0$.
\end{itemize}
\end{definition}

\begin{rankwidthremark}\normalfont
For an unweighted graph with adjacency matrix $\boldsymbol B$,
$\operatorname{rw}_{\mathbb R}(\boldsymbol B)$ equals the
$\mathbb Q$-rank-width studied by Oum, S{\ae}ther, and Vatshelle
\cite[Section~3]{OumSaetherVatshelle2014}, since a rational matrix has
the same rank over $\mathbb Q$ and $\mathbb R$.
This need not equal the usual graph rank-width, which uses
$\mathbb F_2$, although the conditions of having rank-width at most
one coincide, as explained in the proof of
Corollary~\ref{cor:distance-hereditary}.
\end{rankwidthremark}

\begin{examples}\label{ex:rank-width-examples}
\normalfont\leavevmode
\begin{enumerate}[label=(\arabic*),ref=\thetheorem(\arabic*),leftmargin=*,itemsep=6pt]
\item \textbf{The path $P_4$.}
Let $\boldsymbol A$ be the adjacency matrix of the path $P_4$ with edges
$\{1,2\}$, $\{2,3\}$, and $\{3,4\}$.
Figure~\ref{fig:rank-decomposition-example} shows its support graph,
matrix, and a rank-decomposition.
The nonpendant edge $e$ gives $S_e=\{1,2\}$ and $S_e^c=\{3,4\}$;
the highlighted submatrix has rank one.
Every pendant edge gives a nonzero singleton cut, also of rank one.
Thus this decomposition has width one and
$\operatorname{rw}_{\mathbb R}(\boldsymbol A)=1$.

\begin{figure}[H]
\centering
\captionsetup[subfigure]{font=small,labelfont=normalfont,justification=centering,skip=3pt}
\tikzset{
    x=1cm,y=1cm,
    line cap=round,
    every node/.style={font=\small,inner sep=1pt},
    example vertex/.style={circle,fill=black,draw=black,inner sep=1.55pt},
    example edge/.style={draw=black,line width=0.55pt},
    example selected edge/.style={draw=black,line width=1.2pt},
    example index/.style={font=\scriptsize,text=black!70,inner sep=0pt}
}
\begin{minipage}{14.8cm}
\begin{subfigure}[b]{0.31\linewidth}
\centering
\begin{tikzpicture}
  \path[use as bounding box] (0,0) rectangle (4.3,2.7);
  \begin{scope}
    \coordinate (p1) at (0.4,1.05);
    \coordinate (p2) at (1.5,1.05);
    \coordinate (p3) at (2.8,1.05);
    \coordinate (p4) at (3.9,1.05);
    \draw[draw=black!55,dashed,line width=0.45pt]
      (2.15,0.28)--(2.15,1.8);
    \draw[example edge] (p1)--(p2);
    \draw[example selected edge] (p2)--(p3);
    \draw[example edge] (p3)--(p4);
    \foreach \i in {1,2,3,4} {
      \node[example vertex] at (p\i) {};
      \node[below=5pt] at (p\i) {$\i$};
    }
    \node at (0.95,2.13) {$S_e=\{1,2\}$};
    \node at (3.35,2.13) {$S_e^c=\{3,4\}$};
  \end{scope}
\end{tikzpicture}
\caption{Support graph}
\end{subfigure}\hfill
\begin{subfigure}[b]{0.31\linewidth}
\centering
\begin{tikzpicture}
  \path[use as bounding box] (0,0) rectangle (4.3,2.7);
  \begin{scope}[xshift=0.8cm]
    \fill[black!8] (1.36,1.28) rectangle (2.44,2.24);
    \draw[line width=0.55pt]
      (0.35,2.24)--(0.25,2.24)--(0.25,0.32)--(0.35,0.32);
    \draw[line width=0.55pt]
      (2.38,2.24)--(2.48,2.24)--(2.48,0.32)--(2.38,0.32);
    \foreach \row/\col/\entry in {
      1/1/0,1/2/1,1/3/0,1/4/0,
      2/1/1,2/2/0,2/3/1,2/4/0,
      3/1/0,3/2/1,3/3/0,3/4/1,
      4/1/0,4/2/0,4/3/1,4/4/0}
      {
        \ifnum\row<3
          \ifnum\col>2
            \node at ({0.55+0.54*(\col-1)},{2.0-0.48*(\row-1)}) {$\boldsymbol{\entry}$};
          \else
            \node at ({0.55+0.54*(\col-1)},{2.0-0.48*(\row-1)}) {$\entry$};
          \fi
        \else
          \node at ({0.55+0.54*(\col-1)},{2.0-0.48*(\row-1)}) {$\entry$};
        \fi
      }
    \foreach \i in {1,2,3,4} {
      \node[example index] at ({0.55+0.54*(\i-1)},2.5) {$\i$};
      \node[example index] at (-0.05,{2.0-0.48*(\i-1)}) {$\i$};
    }
  \end{scope}
\end{tikzpicture}
\caption{Adjacency matrix}
\end{subfigure}\hfill
\begin{subfigure}[b]{0.31\linewidth}
\centering
\begin{tikzpicture}
  \path[use as bounding box] (0,0) rectangle (4.3,2.7);
  \begin{scope}[xshift=0.15cm]
    \coordinate (tl) at (1.15,1.25);
    \coordinate (tr) at (2.85,1.25);
    \coordinate (leaf1) at (0.25,2.03);
    \coordinate (leaf2) at (0.25,0.47);
    \coordinate (leaf3) at (3.75,2.03);
    \coordinate (leaf4) at (3.75,0.47);
    \draw[example edge] (leaf1)--(tl)--(leaf2);
    \draw[example edge] (leaf3)--(tr)--(leaf4);
    \draw[example selected edge] (tl)--(tr)
      node[midway,above=4pt] {$e$};
    \foreach \name in {tl,tr,leaf1,leaf2,leaf3,leaf4}
      \node[example vertex] at (\name) {};
    \node[above left=2pt] at (leaf1) {$1$};
    \node[below left=2pt] at (leaf2) {$2$};
    \node[above right=2pt] at (leaf3) {$3$};
    \node[below right=2pt] at (leaf4) {$4$};
  \end{scope}
\end{tikzpicture}
\caption{Rank-decomposition}
\end{subfigure}
\end{minipage}
\caption{A rank-decomposition of $P_4$.
The cut $(S_e,S_e^c)$, with $S_e=\{1,2\}$, is represented by the dashed
line in the graph, the bold entries of the shaded submatrix, and the
thick edge $e$ in the tree.}
\label{fig:rank-decomposition-example}
\end{figure}

\item \textbf{The cycle $C_4$.}
Let $\boldsymbol A$ be the adjacency matrix of $C_4$, with vertices labeled cyclically by $1,2,3,4$. Choose a rank-decomposition whose single nonpendant edge induces the cut $(\{1,3\},\{2,4\})$. Its cut matrix $\boldsymbol A[\{1,3\},\{2,4\}]$ is the $2\times2$ all-ones matrix and hence has rank one. Thus this decomposition has width one and $\operatorname{rw}_{\mathbb R}(\boldsymbol A)=1$.

\item \textbf{The cycle $C_n$, $n\geq5$.}\label{ex:cycle-rank-width}
Let $n\geq5$, and let $\boldsymbol A$ be the adjacency matrix
of $C_n$, with vertices labeled cyclically by $1,\ldots,n$.
Then $\operatorname{rw}_{\mathbb R}(\boldsymbol A)=2$. Indeed, every rank-decomposition tree has a nonpendant edge
inducing a cut $(S,S^c)$ with $|S|,|S^c|>1$.
Every such cut of $C_n$ has rank at least two.
Hence every rank-decomposition has width at least two, giving
$\operatorname{rw}_{\mathbb R}(\boldsymbol A)\geq2$.
To show that $\operatorname{rw}_{\mathbb R}(\boldsymbol A)\leq2$, let $S_k=\{1,\ldots,k\}$ for
$2\leq k\leq n-2$. Each cut matrix
$\boldsymbol A[S_k,S_k^c]$ has rank two.
The caterpillar tree in Figure~\ref{fig:cycle-caterpillar}
gives a rank-decomposition whose nonpendant edges induce
exactly these cuts. Thus this decomposition has width two,
giving $\operatorname{rw}_{\mathbb R}(\boldsymbol A)\leq2$. Together, these bounds give
$\operatorname{rw}_{\mathbb R}(\boldsymbol A)=2$.

\end{enumerate}
\end{examples}

\begin{figure}[H]
\centering
\begin{tikzpicture}[
    x=1cm,y=1cm,line cap=round,
    every node/.style={font=\small,inner sep=2pt},
    cat vertex/.style={circle,fill=black,draw=black,inner sep=1.6pt},
    cat edge/.style={draw=black,line width=0.55pt}
]
  \coordinate (a) at (0,0);
  \coordinate (b) at (1.8,0);
  \coordinate (c) at (3.6,0);
  \coordinate (d) at (6.6,0);
  \coordinate (f) at (8.4,0);
  \coordinate (l1) at (-1.15,0.85);
  \coordinate (l2) at (-1.15,-0.85);
  \coordinate (l3) at (1.8,-1.15);
  \coordinate (l4) at (3.6,-1.15);
  \coordinate (ln2) at (6.6,-1.15);
  \coordinate (ln1) at (9.55,0.85);
  \coordinate (ln) at (9.55,-0.85);

  \draw[cat edge] (l1)--(a)--(l2);
  \draw[cat edge] (a)--(b)--(c);
  \draw[cat edge] (c)--(4.65,0);
  \node at (5.1,0) {$\cdots$};
  \draw[cat edge] (5.55,0)--(d)--(f);
  \draw[cat edge] (b)--(l3);
  \draw[cat edge] (c)--(l4);
  \draw[cat edge] (d)--(ln2);
  \draw[cat edge] (ln1)--(f)--(ln);
  \node at (5.1,-1.15) {$\cdots$};

  \foreach \name in {a,b,c,d,f,l1,l2,l3,l4,ln2,ln1,ln}
    \node[cat vertex] at (\name) {};
  \node[above left=2pt] at (l1) {$1$};
  \node[below left=2pt] at (l2) {$2$};
  \node[below=3pt] at (l3) {$3$};
  \node[below=3pt] at (l4) {$4$};
  \node[below=3pt] at (ln2) {$n-2$};
  \node[above right=2pt] at (ln1) {$n-1$};
  \node[below right=2pt] at (ln) {$n$};
  \node[above=4pt] at (0.9,0) {$e_2$};
  \node[above=4pt] at (2.7,0) {$e_3$};
  \node[above=4pt] at (7.5,0) {$e_{n-2}$};
\end{tikzpicture}
\caption{A caterpillar rank-decomposition of $C_n$ (schematic).
The nonpendant edge $e_k$ separates the leaf labels
$S_k=\{1,\ldots,k\}$ from $S_k^c=\{k+1,\ldots,n\}$.
Each such cut has rank two; pendant-edge cuts have rank one.
The dots represent the intervening path vertices and their pendant
leaves; for small $n$, the path is shortened accordingly.}
\label{fig:cycle-caterpillar}
\end{figure}

\subsection{First- and second-order conditions on rank-one cuts}
\label{sec:lemmas}
Throughout this subsection, let $\boldsymbol A\in\mathcal S_0^n$.
We derive consequences of the first- and second-order conditions for
cuts, and express them using weighted phasor sums when the cut has rank
one. These results will be used to prove the induction lemma in
Section~\ref{sec:induction-lemma}.
For a cut $(S,S^c)$ with $\rho_{\boldsymbol A}(S)=1$, choose nonzero vectors
$\boldsymbol p\in\mathbb R^S$ and $\boldsymbol q\in\mathbb R^{S^c}$ such that
\begin{equation}\label{eq:rank-one-cut-factorization}
\boldsymbol A[S,S^c]=\boldsymbol p\boldsymbol q^{\mathsf T}.
\end{equation}
Figure~\ref{fig:rank-one-cut} illustrates the edges across such a cut.

\begin{figure}[!htbp]
\centering
\begin{tikzpicture}[
    x=1cm,y=1cm,line cap=round,line join=round,
    every node/.style={font=\small,inner sep=1.5pt,text=black},
    cut vertex/.style={circle,fill=black,draw=black,inner sep=1.6pt},
    cut set/.style={draw=black,line width=0.45pt},
    cut support/.style={draw=black,dashed,line width=0.45pt},
    crossing edge/.style={draw=black,line width=0.55pt}
]
  \draw[cut set,rounded corners=12pt] (-5,-1.25) rectangle (-2.1,1.35);
  \draw[cut set,rounded corners=12pt] (2.1,-1.25) rectangle (5,1.35);
  \node at (-3.55,1.58) {$S$};
  \node at (3.55,1.58) {$S^c$};
  \draw[cut support] (-2.75,0) ellipse[x radius=0.45,y radius=0.86];
  \draw[cut support] (2.75,0) ellipse[x radius=0.45,y radius=0.86];
  \node at (-2.75,1.05) {$\operatorname{supp}(\boldsymbol p)$};
  \node at (2.75,1.05) {$\operatorname{supp}(\boldsymbol q)$};

  \coordinate (ra1) at (-2.75,0.48);
  \coordinate (ra2) at (-2.75,-0.48);
  \coordinate (rb1) at (2.75,0.48);
  \coordinate (rb2) at (2.75,0);
  \coordinate (rb3) at (2.75,-0.48);
  \foreach \a in {ra1,ra2}
    \foreach \b in {rb1,rb2,rb3}
      \draw[crossing edge] (\a)--(\b);
  \node[above=4pt] at (0,0.48) {$A_{ij}=p_iq_j$};
  \foreach \name in {ra1,ra2,rb1,rb2,rb3}
    \node[cut vertex] at (\name) {};
  \node[left=3pt] at (ra1) {$i$};
  \node[right=3pt] at (rb1) {$j$};

  \node[cut vertex] at (-4.15,0.4) {};
  \node[cut vertex] at (-4.05,-0.5) {};
  \node[cut vertex] at (4.15,0.4) {};
  \node[cut vertex] at (4.05,-0.5) {};
\end{tikzpicture}
\caption{Support of a rank-one cut
$\boldsymbol A[S,S^c]=\boldsymbol p\boldsymbol q^\top$.
The dashed regions contain the vertices with $p_i\neq0$ and $q_j\neq0$,
respectively; the edges across the cut form a complete bipartite graph
between these two sets.
Each crossing edge $ij$ has weight $p_iq_j\neq0$.
Edges within either side are omitted.}
\label{fig:rank-one-cut}
\end{figure}

Summing the first-order condition on any cut gives the
following identity.\par\nopagebreak
\begin{lemma}\label{lemma:first-order-set}
Let $\boldsymbol{A}\in\mathcal S^n_0$. Let $\boldsymbol\theta$ be a first-order point of $E_{\boldsymbol A}(\boldsymbol\theta)$. Then, for every subset $S\subsetneq V$,
\begin{equation}\label{first-order}
\sum_{i\in S}\sum_{j\in S^c} A_{ij}\sin(\theta_i-\theta_j)=0.
\end{equation}
\end{lemma}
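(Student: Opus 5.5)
The plan is to sum the first-order conditions \eqref{eq:foc} over all $i\in S$ and observe that the terms with both indices inside $S$ cancel. Here $S$ is a proper subset of $V$, and we may take it nonempty; for $S=\emptyset$ the double sum in \eqref{first-order} is empty, so the identity is trivial.

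Fix such an $S$. By \eqref{eq:foc}, for every $i$ we have $\sum_{j=1}^n A_{ij}\sin(\theta_i-\theta_j)=0$. Summing over $i\in S$ and splitting the inner sum according to whether $j\in S$ or $j\in S^c$ gives
\[
0=\sum_{i\in S}\sum_{j\in S}A_{ij}\sin(\theta_i-\theta_j)
+\sum_{i\in S}\sum_{j\in S^c}A_{ij}\sin(\theta_i-\theta_j).
\]

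The key step is to show that the first double sum vanishes. We exchange the names of the indices $i$ and $j$ in that sum. Since $\boldsymbol A$ is symmetric, $A_{ij}=A_{ji}$, and since sine is odd, $\sin(\theta_j-\theta_i)=-\sin(\theta_i-\theta_j)$. The sum is therefore equal to its own negative, hence zero. The diagonal terms cause no trouble, because $A_{ii}=0$ and also $\sin 0=0$. What remains is exactly \eqref{first-order}.

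There is no real obstacle; the only point to check is this antisymmetry cancellation. Note that the argument uses only $\boldsymbol A\in\mathcal S_0^n$ and no sign assumptions, consistent with the hypotheses of the lemma.
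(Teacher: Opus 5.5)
Your proposal is correct and follows essentially the same approach as the paper: sum \eqref{eq:foc} over $i\in S$, then cancel the terms with both indices in $S$ using $A_{ij}=A_{ji}$ and the oddness of sine. Your remarks on the empty set and the diagonal terms are harmless additions.
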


\begin{proof}
Summing \eqref{eq:foc} over $i\in S$ gives
\[
0=
\sum_{i\in S}\sum_{j=1}^nA_{ij}\sin(\theta_i-\theta_j)
=
\sum_{i\in S}\sum_{j\in S^c}A_{ij}\sin(\theta_i-\theta_j).
\]
The terms with $i,j\in S$ cancel pairwise because
$A_{ij}=A_{ji}$ and $\sin(\theta_i-\theta_j)=-\sin(\theta_j-\theta_i)$.
\end{proof}

For a cut with rank one, this identity implies that the two weighted
phasor sums are parallel whenever both are nonzero.\par\nopagebreak
\begin{corollary}
\label{lemma:first-order-module2}
Let $\boldsymbol{A}\in\mathcal S^n_0$. Let \(S\subsetneq V\) satisfy \(\rho_{\boldsymbol A}(S)=1\), and choose
\(\boldsymbol p,\boldsymbol q\) as in \eqref{eq:rank-one-cut-factorization}.
Let \(\boldsymbol\theta\) be a first-order point of \(E_{\boldsymbol A}(\boldsymbol\theta)\).
If both vectors
\[
\sum_{i\in S} p_i \boldsymbol v_i
\qquad\text{and}\qquad
\sum_{j\in S^c} q_j \boldsymbol v_j
\]
are nonzero, then they are parallel; that is, there exists \(\mu\neq0\) such that
\[
\sum_{i\in S} p_i \boldsymbol v_i
=
\mu
\sum_{j\in S^c} q_j \boldsymbol v_j.
\]
\end{corollary}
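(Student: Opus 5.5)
The plan is to apply Lemma~\ref{lemma:first-order-set} to the cut $(S,S^c)$ and use the factorization \eqref{eq:rank-one-cut-factorization} to rewrite the resulting identity as a cross product of the two weighted phasor sums. Concretely, I will first substitute $A_{ij}=p_iq_j$ for $i\in S$, $j\in S^c$ into \eqref{first-order}, obtaining
\[
0=\sum_{i\in S}\sum_{j\in S^c}p_iq_j\sin(\theta_i-\theta_j).
\]
Next I will write $\sin(\theta_i-\theta_j)=\sin\theta_i\cos\theta_j-\cos\theta_i\sin\theta_j$, which is the planar cross product (determinant) $\boldsymbol v_j\times\boldsymbol v_i:=\det[\boldsymbol v_j\ \boldsymbol v_i]$ of the unit vectors $\boldsymbol v_j=(\cos\theta_j,\sin\theta_j)$ and $\boldsymbol v_i=(\cos\theta_i,\sin\theta_i)$.

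Since the determinant is bilinear, the double sum factors:
\[
0=\det\Bigl[\,\sum_{j\in S^c}q_j\boldsymbol v_j\ \ \sum_{i\in S}p_i\boldsymbol v_i\,\Bigr].
\]
A vanishing $2\times2$ determinant means the two columns are linearly dependent. When both columns are nonzero, each is a nonzero scalar multiple of the other, so there is $\mu\neq0$ with $\sum_{i\in S}p_i\boldsymbol v_i=\mu\sum_{j\in S^c}q_j\boldsymbol v_j$, and $\mu\neq0$ because the left side is nonzero.

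I do not expect a real obstacle; the only care needed is the sign and orientation bookkeeping in identifying $\sin(\theta_i-\theta_j)$ with the determinant. That bookkeeping is harmless here, because the conclusion only uses the vanishing of the determinant, which is insensitive to the sign. The hypothesis $S\subsetneq V$ is needed so that Lemma~\ref{lemma:first-order-set} applies, and it is automatic since $\rho_{\boldsymbol A}(S)=1$ forces $S^c\neq\emptyset$.
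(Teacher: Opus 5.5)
Your proof is correct and is essentially the paper's argument. The paper writes the identity as $\bigl\langle \boldsymbol R\sum_{i\in S}p_i\boldsymbol v_i,\sum_{j\in S^c}q_j\boldsymbol v_j\bigr\rangle=0$, with $\boldsymbol R$ the rotation by $\pi/2$; since $\langle\boldsymbol R\boldsymbol a,\boldsymbol b\rangle=\det[\boldsymbol a\ \boldsymbol b]$, this is the same vanishing $2\times2$ determinant, and your sign bookkeeping $\det[\boldsymbol v_j\ \boldsymbol v_i]=\sin(\theta_i-\theta_j)$ is also correct.
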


\begin{proof}
Let
\[
\boldsymbol R:=
\begin{pmatrix}
0 & -1\\
1 & 0
\end{pmatrix}
\]
denote the counterclockwise rotation matrix through an angle \(\pi/2\).
Since $\langle\boldsymbol R\boldsymbol v_i,\boldsymbol v_j\rangle
=-\sin(\theta_i-\theta_j)$, the factorization
\eqref{eq:rank-one-cut-factorization} and Lemma~\ref{lemma:first-order-set} give
\[
\left\langle
\boldsymbol R\sum_{i\in S}p_i\boldsymbol v_i,
\sum_{j\in S^c}q_j\boldsymbol v_j
\right\rangle
=-\sum_{i\in S}\sum_{j\in S^c}
A_{ij}\sin(\theta_i-\theta_j)=0.
\]
Since both weighted sums are nonzero vectors in $\mathbb R^2$, this implies that
they are parallel, as claimed.
\end{proof}

The second-order condition gives the following inequality for any
cut.\par\nopagebreak
\begin{lemma}\label{lemma:cut_stability}
Let $\boldsymbol A\in\mathcal S_0^n$, and let $\boldsymbol\theta$ be a
second-order point of $E_{\boldsymbol A}(\boldsymbol\theta)$. Then, for every $S\subsetneq V$,
\[
\sum_{i\in S}\sum_{j\in S^c}
A_{ij}\langle \boldsymbol{v}_i,\boldsymbol{v}_j\rangle \geq 0.
\]
\end{lemma}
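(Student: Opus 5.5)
The plan is to test the second-order condition \eqref{eq:soc} against the indicator vector of $S$. Take $\boldsymbol d=\boldsymbol 1_S\in\mathbb R^n$. Since $\boldsymbol\theta$ is a second-order point, $\boldsymbol H=\nabla^2E_{\boldsymbol A}(\boldsymbol\theta)\succeq0$, and therefore $\boldsymbol 1_S^\top\boldsymbol H\boldsymbol 1_S\ge0$. It remains to identify this quadratic form with the cut sum in the statement.

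Write $\boldsymbol H$ as a weighted Laplacian with edge weights $w_{ij}:=A_{ij}\cos(\theta_i-\theta_j)$. By \eqref{hessian}, the diagonal entry is $H_{ii}=\sum_k w_{ik}$ and the off-diagonal entry is $H_{ij}=-w_{ij}$. The zero diagonal of $\boldsymbol A$ gives $w_{ii}=0$, so there is no self-term in $H_{ii}$. Using $w_{ij}=w_{ji}$, the Laplacian identity reads
\[
\boldsymbol x^\top\boldsymbol H\boldsymbol x=\sum_{i<j}w_{ij}(x_i-x_j)^2 .
\]
This can be checked directly by expanding $\sum_i x_i^2\sum_k w_{ik}-\sum_{i\ne j}w_{ij}x_ix_j$. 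Note that no sign condition on $w_{ij}$ is needed, which matters because $\boldsymbol A$ is only assumed to lie in $\mathcal S_0^n$.

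Substitute $\boldsymbol x=\boldsymbol 1_S$. Then $(x_i-x_j)^2=1$ exactly when one of $i,j$ lies in $S$ and the other in $S^c$, and it is $0$ otherwise. Each unordered crossing pair is counted once, so
\[
\boldsymbol 1_S^\top\boldsymbol H\boldsymbol 1_S=\sum_{i\in S}\sum_{j\in S^c}A_{ij}\cos(\theta_i-\theta_j).
\]
Finally, $\cos(\theta_i-\theta_j)=\langle\boldsymbol v_i,\boldsymbol v_j\rangle$ for the phase vectors $\boldsymbol v_i=(\cos\theta_i,\sin\theta_i)$. Combined with $\boldsymbol 1_S^\top\boldsymbol H\boldsymbol 1_S\ge0$, this gives the claim.

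No step here is a real obstacle. The only care needed is the bookkeeping in the Laplacian identity, namely counting each crossing pair once rather than twice, and confirming that the zero diagonal removes the $i=k$ term from $H_{ii}$. The case $S=\emptyset$ is trivial, since both sides vanish.
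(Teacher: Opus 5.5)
Your proof is correct and is essentially the paper's own argument. You test \eqref{eq:soc} with $\boldsymbol x=\boldsymbol 1_S$, expand $\boldsymbol x^\top\boldsymbol H\boldsymbol x=\sum_{i<j}A_{ij}\cos(\theta_i-\theta_j)(x_i-x_j)^2$ (which needs no sign assumption on $\boldsymbol A$), and observe that only the pairs crossing the cut survive.
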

\begin{proof}
Set $\boldsymbol x=\boldsymbol 1_S$. We verify that $\boldsymbol x^\top\boldsymbol H\boldsymbol x=\sum_{i\in S}\sum_{j\in S^c}A_{ij}\langle\boldsymbol v_i,\boldsymbol v_j\rangle$. Indeed, by the Hessian formula \eqref{hessian},
\[
\begin{aligned}
\boldsymbol x^\top\boldsymbol H\boldsymbol x
&=\sum_{1\leq i<j\leq n}A_{ij}\cos(\theta_i-\theta_j)(x_i-x_j)^2\\
&=\sum_{i\in S}\sum_{j\in S^c}A_{ij}\cos(\theta_i-\theta_j)\\
&=\sum_{i\in S}\sum_{j\in S^c}A_{ij}\langle\boldsymbol v_i,\boldsymbol v_j\rangle.
\end{aligned}
\]
The second equality holds because $(x_i-x_j)^2=1$ if $i,j$ lie on opposite sides of the cut $(S,S^c)$, and $(x_i-x_j)^2=0$ otherwise.
The conclusion follows from \eqref{eq:soc}.
\end{proof}

For a cut with rank one, this inequality says that the two weighted
phasor sums have a nonnegative inner product.\par\nopagebreak
\begin{corollary}
\label{coro:second-order-module}
Let \(S\subsetneq V\) satisfy \(\rho_{\boldsymbol A}(S)=1\), and choose
\(\boldsymbol p,\boldsymbol q\) as in \eqref{eq:rank-one-cut-factorization}.
Let \(\boldsymbol\theta\) be a second-order point of \(E_{\boldsymbol A}(\boldsymbol\theta)\). Then
\[
\left\langle
\sum_{i\in S}p_i\boldsymbol v_i,
\sum_{j\in S^c}q_j\boldsymbol v_j
\right\rangle
\geq 0.
\]
\end{corollary}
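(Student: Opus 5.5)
The plan is to apply Lemma~\ref{lemma:cut_stability} to the cut $(S,S^c)$ and then substitute the rank-one factorization \eqref{eq:rank-one-cut-factorization}. Lemma~\ref{lemma:cut_stability} needs only that $\boldsymbol\theta$ is a second-order point and that $S\subsetneq V$, and both hold here. It gives
\[
\sum_{i\in S}\sum_{j\in S^c}A_{ij}\langle\boldsymbol v_i,\boldsymbol v_j\rangle\geq0 .
\]

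Next, for $i\in S$ and $j\in S^c$, replace each coupling $A_{ij}$ by $p_iq_j$, which is exactly the entrywise form of $\boldsymbol A[S,S^c]=\boldsymbol p\boldsymbol q^{\mathsf T}$. The left-hand side then becomes the double sum $\sum_{i\in S}\sum_{j\in S^c}p_iq_j\langle\boldsymbol v_i,\boldsymbol v_j\rangle$.

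Finally, bilinearity of the Euclidean inner product on $\mathbb R^2$ lets the double sum factor as
\[
\sum_{i\in S}\sum_{j\in S^c}p_iq_j\langle\boldsymbol v_i,\boldsymbol v_j\rangle
=\Bigl\langle\sum_{i\in S}p_i\boldsymbol v_i,\sum_{j\in S^c}q_j\boldsymbol v_j\Bigr\rangle .
\]
Combining this identity with the inequality from the first step yields the claim.

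There is no real obstacle. The only points to check are that the signs of $\boldsymbol p$ and $\boldsymbol q$ play no role, since only their product $p_iq_j=A_{ij}$ appears, and that no nonnegativity of $\boldsymbol A$ is needed, because Lemma~\ref{lemma:cut_stability} holds for every $\boldsymbol A\in\mathcal S_0^n$.
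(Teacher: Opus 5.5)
Your proposal is correct and is essentially the paper's own proof: apply Lemma~\ref{lemma:cut_stability} to the cut $(S,S^c)$, substitute $A_{ij}=p_iq_j$ from \eqref{eq:rank-one-cut-factorization}, and use bilinearity of the inner product. Your side remarks are also accurate: the signs of $\boldsymbol p$ and $\boldsymbol q$ do not matter, and no nonnegativity of $\boldsymbol A$ is needed.
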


\begin{proof}
By Lemma~\ref{lemma:cut_stability}, the factorization
\eqref{eq:rank-one-cut-factorization}, and bilinearity of the inner product,
\[
0\leq\sum_{i\in S}\sum_{j\in S^c}
A_{ij}\langle\boldsymbol v_i,\boldsymbol v_j\rangle
=\left\langle
\sum_{i\in S}p_i\boldsymbol v_i,
\sum_{j\in S^c}q_j\boldsymbol v_j
\right\rangle.
\]
\end{proof}

\Needspace{9\baselineskip}
For a cut with rank one, combining the parallelism of the two nonzero weighted sums in
Corollary~\ref{lemma:first-order-module2} with their nonnegative inner
product in Corollary~\ref{coro:second-order-module} gives the following
corollary.\par\nopagebreak
\begin{corollary}\label{lemma:1}
Let \(S\subsetneq V\) satisfy \(\rho_{\boldsymbol A}(S)=1\), and choose
\(\boldsymbol p,\boldsymbol q\) as in \eqref{eq:rank-one-cut-factorization}.
Let \(\boldsymbol\theta\) be a first- and second-order point of \(E_{\boldsymbol A}(\boldsymbol\theta)\).
If both vectors
\[
\sum_{i\in S}p_i\boldsymbol v_i
\qquad\text{and}\qquad
\sum_{j\in S^c}q_j\boldsymbol v_j
\]
are nonzero, then they point in the same direction; that is,
\[
\sum_{i\in S}p_i\boldsymbol v_i
=
\mu
\sum_{j\in S^c}q_j\boldsymbol v_j
\qquad\text{for some }\mu>0.
\]
\end{corollary}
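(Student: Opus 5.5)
The plan is to combine Corollary~\ref{lemma:first-order-module2} with Corollary~\ref{coro:second-order-module} directly. Write $\boldsymbol a:=\sum_{i\in S}p_i\boldsymbol v_i$ and $\boldsymbol b:=\sum_{j\in S^c}q_j\boldsymbol v_j$, and assume both are nonzero.

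First, since $\boldsymbol\theta$ is a first-order point, Corollary~\ref{lemma:first-order-module2} applies and gives some $\mu\neq0$ with $\boldsymbol a=\mu\boldsymbol b$. The proof of that corollary only uses the cut identity of Lemma~\ref{lemma:first-order-set} together with the factorization \eqref{eq:rank-one-cut-factorization}. That corollary is stated for $\boldsymbol A\in\mathcal S_0^n$, which is the standing assumption of this subsection.

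Second, since $\boldsymbol\theta$ is also a second-order point, Corollary~\ref{coro:second-order-module} gives $\langle\boldsymbol a,\boldsymbol b\rangle\geq0$. Substituting $\boldsymbol a=\mu\boldsymbol b$ yields
\[
\mu\,|\boldsymbol b|^2=\langle\boldsymbol a,\boldsymbol b\rangle\geq0 .
\]
Because $\boldsymbol b\neq\boldsymbol 0$, we have $|\boldsymbol b|^2>0$, hence $\mu\geq0$. Combined with $\mu\neq0$, this gives $\mu>0$, which is the claim.

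There is no real obstacle here. The only point needing care is that the strict sign comes from two facts used together: $\mu\neq0$, supplied by the nonvanishing of $\boldsymbol a$ in the first-order corollary, and $|\boldsymbol b|>0$. The second-order inequality alone gives only $\mu\geq0$.
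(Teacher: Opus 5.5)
Your proposal is correct and follows the paper's proof exactly: you obtain $\mu\neq0$ from Corollary~\ref{lemma:first-order-module2}, and $\mu\,|\boldsymbol b|^2=\langle\boldsymbol a,\boldsymbol b\rangle\geq0$ from Corollary~\ref{coro:second-order-module}. Since $\boldsymbol b\neq\boldsymbol 0$, these together force $\mu>0$.
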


\begin{proof}
By Corollary~\ref{lemma:first-order-module2}, there exists $\mu\neq0$ such that
\[
\sum_{i\in S}p_i\boldsymbol v_i
=\mu\sum_{j\in S^c}q_j\boldsymbol v_j.
\]
By Corollary~\ref{coro:second-order-module},
\[
0\leq
\left\langle
\sum_{i\in S}p_i\boldsymbol v_i,
\sum_{j\in S^c}q_j\boldsymbol v_j
\right\rangle
=\mu\left\|\sum_{j\in S^c}q_j\boldsymbol v_j\right\|^2.
\]
Since the second vector is nonzero and $\mu\neq0$, we obtain $\mu>0$.
\end{proof}

\section{Proof of the main theorem}
\label{sec:proof}
\label{subsec:proof-strategy}
\noindent\textbf{Proof strategy.}
Across a cut with rank one, the interaction energy is determined by
two weighted phasor sums, one on each side. The first- and second-order
conditions force these sums to point in the same direction whenever
both are nonzero.

At each internal node of a rank-decomposition tree, the three branches
partition $V$ into $X,Y,Z$. Compatibility of the three rank-one
factorizations allows us to merge synchronized sets $X$ and $Y$:
their common phases must agree. This is the induction lemma proved in
Section~\ref{sec:induction-lemma}, which we apply along the tree in
Section~\ref{sec:main-proof}.

The main difficulty is that a weighted phasor sum may vanish.
When $X$ and $Y$ interact only through $Z$, a vanishing sum on $Z$
would allow $Z$ to rotate without changing the energy.
Non-flatness excludes this directly; local minimality excludes it
through the second-order conditions along this constant-energy path.

\subsection{The induction lemma}
\label{sec:induction-lemma}
We first introduce two basic linear algebra facts that will be used to
prove Lemma~\ref{base-weighted}.

\begin{lemma}[Extension of a rank-one factorization]
\label{lem:rank-one-extension}
Let $\boldsymbol B\in\mathbb R^{m\times r}$ and
$\boldsymbol C\in\mathbb R^{m\times s}$ satisfy $\operatorname{rank}\bigl[\boldsymbol B\;\;\boldsymbol C\bigr]=1$.
Suppose that
$\boldsymbol C=\boldsymbol p\boldsymbol q^\top
\neq\boldsymbol 0$, where $\boldsymbol p\in\mathbb R^m$
and $\boldsymbol q\in\mathbb R^s$.
Then there exists a vector $\boldsymbol f\in\mathbb R^r$
such that
\[
\boldsymbol B=\boldsymbol p\boldsymbol f^\top,
\qquad
\bigl[\boldsymbol B\;\;\boldsymbol C\bigr]
=
\boldsymbol p
\begin{bmatrix}
\boldsymbol f^\top & \boldsymbol q^\top
\end{bmatrix}.
\]
\end{lemma}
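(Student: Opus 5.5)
The plan is to use only the fact that a rank-one matrix has a one-dimensional column space, and that this space is already spanned by $\boldsymbol p$ because $\boldsymbol C\neq\boldsymbol 0$.

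First, since $\boldsymbol C=\boldsymbol p\boldsymbol q^\top\neq\boldsymbol 0$, both $\boldsymbol p\neq\boldsymbol 0$ and $\boldsymbol q\neq\boldsymbol 0$. Every column of $\boldsymbol C$ is a multiple of $\boldsymbol p$, and at least one column, namely $q_k\boldsymbol p$ with $q_k\neq0$, is nonzero. Hence $\boldsymbol p$ lies in the column space of $[\boldsymbol B\;\;\boldsymbol C]$.

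Second, this column space has dimension $\operatorname{rank}[\boldsymbol B\;\;\boldsymbol C]=1$, and it contains the nonzero vector $\boldsymbol p$. Therefore it equals $\mathbb R\boldsymbol p$. In particular, each column $\boldsymbol b_\ell$ of $\boldsymbol B$, for $\ell=1,\ldots,r$, can be written as $\boldsymbol b_\ell=f_\ell\boldsymbol p$ for a unique scalar $f_\ell$. Uniqueness holds because $\boldsymbol p\neq\boldsymbol 0$; explicitly, $f_\ell=\langle\boldsymbol p,\boldsymbol b_\ell\rangle/|\boldsymbol p|^2$.

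Setting $\boldsymbol f=(f_1,\ldots,f_r)$ gives $\boldsymbol B=\boldsymbol p\boldsymbol f^\top$. Concatenating with $\boldsymbol C=\boldsymbol p\boldsymbol q^\top$ then yields $[\boldsymbol B\;\;\boldsymbol C]=\boldsymbol p[\boldsymbol f^\top\;\;\boldsymbol q^\top]$.

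There is no real obstacle here. The only point needing care is to use the hypothesis $\boldsymbol C\neq\boldsymbol 0$, rather than just $\boldsymbol p\neq\boldsymbol 0$, to guarantee that $\boldsymbol p$ actually lies in the column space. Without it, a rank-one $\boldsymbol B$ with column space different from $\mathbb R\boldsymbol p$ would be possible when $\boldsymbol q=\boldsymbol 0$.
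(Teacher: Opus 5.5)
Your proposal is correct and follows essentially the same route as the paper: both pick a column $q_k\boldsymbol p\neq\boldsymbol 0$ of $\boldsymbol C$ and use $\operatorname{rank}\bigl[\boldsymbol B\;\;\boldsymbol C\bigr]=1$ to conclude that every column of $\boldsymbol B$ is a scalar multiple of $\boldsymbol p$. Phrasing it as ``the column space equals $\mathbb R\boldsymbol p$,'' with the explicit formula for $f_\ell$, is only a cosmetic difference.
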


\begin{proof}
Choose $j$ with $q_j\neq0$. The $j$th column of $\boldsymbol C$ is $q_j\boldsymbol p\neq\boldsymbol 0$.
Since $\bigl[\boldsymbol B\;\;\boldsymbol C\bigr]$ has rank one,
every column of $\boldsymbol B$ is a scalar multiple of this column,
and hence of $\boldsymbol p$.
Writing the $k$th column as $f_k\boldsymbol p$ gives $\boldsymbol B=\boldsymbol p\boldsymbol f^\top$.
Together with $\boldsymbol C=\boldsymbol p\boldsymbol q^\top$,
this yields the claimed factorization.
\end{proof}

\begin{lemma}[Uniqueness of rank-one factorizations]
\label{lem:rank-one-uniqueness}
Let $\boldsymbol a,\boldsymbol c\in\mathbb R^m$ and
$\boldsymbol b,\boldsymbol d\in\mathbb R^n$ satisfy
$\boldsymbol a\boldsymbol b^\top
=\boldsymbol c\boldsymbol d^\top\neq\boldsymbol 0$.
Then there exists a unique scalar
$\alpha\in\mathbb R\setminus\{0\}$ such that
\[
\boldsymbol a=\alpha\boldsymbol c,
\qquad
\boldsymbol d=\alpha\boldsymbol b.
\]
\end{lemma}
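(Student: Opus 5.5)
Since $\boldsymbol a\boldsymbol b^\top\neq\boldsymbol 0$, both $\boldsymbol a$ and $\boldsymbol b$ are nonzero, and likewise $\boldsymbol c\neq\boldsymbol 0$ and $\boldsymbol d\neq\boldsymbol 0$. The plan is to read off the relation between the factors from a single nonzero column and a single nonzero row.

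Choose an index $k$ with $b_k\neq0$. Comparing the $k$th columns of $\boldsymbol a\boldsymbol b^\top=\boldsymbol c\boldsymbol d^\top$ gives $b_k\boldsymbol a=d_k\boldsymbol c$. Since $b_k\boldsymbol a\neq\boldsymbol 0$, we must have $d_k\neq0$, and so $\boldsymbol a=\alpha\boldsymbol c$ with $\alpha:=d_k/b_k\neq0$.

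Substituting this into the identity gives $\boldsymbol c(\alpha\boldsymbol b)^\top=\boldsymbol c\boldsymbol d^\top$, that is, $\boldsymbol c(\alpha\boldsymbol b-\boldsymbol d)^\top=\boldsymbol 0$. Pick an index $i$ with $c_i\neq0$. The $i$th row of this product is $c_i(\alpha\boldsymbol b-\boldsymbol d)^\top=\boldsymbol 0$, which yields $\boldsymbol d=\alpha\boldsymbol b$.

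For uniqueness, suppose $\boldsymbol a=\alpha\boldsymbol c=\alpha'\boldsymbol c$. Then $(\alpha-\alpha')\boldsymbol c=\boldsymbol 0$, and since $\boldsymbol c\neq\boldsymbol 0$ this forces $\alpha=\alpha'$.

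No step here is a real obstacle. The only point that needs care is justifying that $d_k\neq0$, so that $\alpha$ is well defined and nonzero; this follows directly from the nonvanishing of $b_k\boldsymbol a$.
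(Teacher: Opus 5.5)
Your proof is correct and follows essentially the same route as the paper. The only difference is the test vector: the paper multiplies the identity on the right by $\boldsymbol b$ to get $\alpha=(\boldsymbol d^\top\boldsymbol b)/\|\boldsymbol b\|^2$, whereas you read off a single column with $b_k\neq0$; both then substitute back, cancel the nonzero $\boldsymbol c$, and deduce uniqueness from $\boldsymbol c\neq\boldsymbol 0$.
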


\begin{proof}
Since the common matrix is nonzero, all four vectors are nonzero.
Multiplying the identity on the right by $\boldsymbol b$ gives
\[
\|\boldsymbol b\|^2\boldsymbol a
=(\boldsymbol d^\top\boldsymbol b)\boldsymbol c.
\]
The left-hand side is nonzero, so $\boldsymbol d^\top\boldsymbol b\neq0$.
Thus $\boldsymbol a=\alpha\boldsymbol c$ with $\alpha:=(\boldsymbol d^\top\boldsymbol b)/\|\boldsymbol b\|^2\neq0$.
Substituting this into the assumed identity gives $\boldsymbol c(\alpha\boldsymbol b-\boldsymbol d)^\top =\boldsymbol 0$.
Since $\boldsymbol c\neq\boldsymbol 0$, we obtain $\boldsymbol d=\alpha\boldsymbol b$.
The uniqueness of $\alpha$ follows from $\boldsymbol c\neq\boldsymbol 0$.
\end{proof}

\begin{lemma}[Induction lemma]\label{base-weighted}
Let \(\boldsymbol A\in\mathcal S_0^n\) be entrywise nonnegative, and assume that the support graph of \(\boldsymbol A\) is connected.
Let $\boldsymbol\theta$ be either a local minimum or a non-flat first-
and second-order point of $E_{\boldsymbol A}(\boldsymbol\theta)$.
Let $V=X\cup Y\cup Z$ be a partition into
three nonempty sets satisfying
\[
\rho_{\boldsymbol A}(X)=\rho_{\boldsymbol A}(Y)=\rho_{\boldsymbol A}(Z)=1.
\]
Suppose that \(\boldsymbol\theta\) is synchronized on both \(X\) and \(Y\);
that is, there exist
\(\theta_X,\theta_Y\in\mathbb R\) such that
\[
\theta_i\equiv\theta_X\pmod{2\pi}\quad(i\in X),
\qquad
\theta_j\equiv\theta_Y\pmod{2\pi}\quad(j\in Y).
\]
Then $\theta_X\equiv\theta_Y\pmod{2\pi}$;  equivalently, $\boldsymbol\theta$ is synchronized on $X\cup Y$.
\end{lemma}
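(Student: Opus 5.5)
The plan is to reduce everything to three nonnegative weight vectors and three scalars, then compare the weighted phasor sums of Corollary~\ref{lemma:1} across the three cuts $X$, $Y$, $Z$.

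\textbf{Step 1: a common factorization.} Since $\rho_{\boldsymbol A}(X)=1$, the row block $\boldsymbol A[X,Y\cup Z]=\bigl[\boldsymbol A[X,Y]\;\;\boldsymbol A[X,Z]\bigr]$ has rank one. By Lemma~\ref{lem:rank-one-extension}, the two blocks share a left factor. Doing the same for $Y$ and $Z$, and reconciling the factors with Lemma~\ref{lem:rank-one-uniqueness}, I expect to obtain nonnegative vectors $\boldsymbol x\in\mathbb R^X$, $\boldsymbol y\in\mathbb R^Y$, $\boldsymbol z\in\mathbb R^Z$ and scalars $\alpha,\beta,\gamma\ge0$ with
\[
\boldsymbol A[X,Y]=\alpha\,\boldsymbol x\boldsymbol y^\top,\qquad
\boldsymbol A[X,Z]=\beta\,\boldsymbol x\boldsymbol z^\top,\qquad
\boldsymbol A[Y,Z]=\gamma\,\boldsymbol y\boldsymbol z^\top .
\]
Nonnegativity of $\boldsymbol A$ lets me choose nonnegative factors; the cases where some block vanishes must be tracked. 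Connectedness forces at least two of $\alpha,\beta,\gamma$ to be positive.

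Set $\boldsymbol u:=(\cos\theta_X,\sin\theta_X)$, $\boldsymbol u':=(\cos\theta_Y,\sin\theta_Y)$ and $\boldsymbol w:=\sum_{k\in Z}z_k\boldsymbol v_k$. Synchronization on $X$ and $Y$ gives $\sum_{X}x_i\boldsymbol v_i=x(X)\boldsymbol u$ with $x(X)>0$, and likewise $y(Y)\boldsymbol u'$ with $y(Y)>0$.

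\textbf{Step 2: apply Corollary~\ref{lemma:1} to the three cuts.} This gives three statements, each valid whenever the relevant vector is nonzero:
\begin{itemize}
\item $\alpha y(Y)\boldsymbol u'+\beta\boldsymbol w$ is a positive multiple of $\boldsymbol u$;
\item $\alpha x(X)\boldsymbol u+\gamma\boldsymbol w$ is a positive multiple of $\boldsymbol u'$;
\item $\boldsymbol w$ is a positive multiple of $\beta x(X)\boldsymbol u+\gamma y(Y)\boldsymbol u'$.
\end{itemize}

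\textbf{Step 3: the case $\alpha>0$.} Suppose $\boldsymbol u\neq\pm\boldsymbol u'$. Then $\boldsymbol w$ lies in the closed cone spanned by $\boldsymbol u,\boldsymbol u'$, say $\boldsymbol w=s\boldsymbol u+t\boldsymbol u'$ with $s,t\ge0$. The first relation then gives a $\boldsymbol u'$-coefficient $\alpha y(Y)+\beta t>0$ in a vector parallel to $\boldsymbol u$, which is a contradiction. When $\boldsymbol w=0$, the first relation directly gives $\boldsymbol u=\boldsymbol u'$.

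The antipodal case $\boldsymbol u'=-\boldsymbol u$ I would handle with the Hessian test vectors $\boldsymbol 1_X$ and $\boldsymbol 1_Y$ from Lemma~\ref{lemma:cut_stability}. They give
\[
-\alpha x(X)y(Y)+\beta x(X)\langle\boldsymbol u,\boldsymbol w\rangle\ge0,
\qquad
-\alpha x(X)y(Y)-\gamma y(Y)\langle\boldsymbol u,\boldsymbol w\rangle\ge0 .
\]
The two inequalities force $\langle\boldsymbol u,\boldsymbol w\rangle$ to have opposite signs unless $\alpha=0$, so this case is impossible.

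\textbf{Step 4: the case $\alpha=0$.} Now $\beta,\gamma>0$. If $\boldsymbol w\ne0$, the first two relations say $\boldsymbol w$ is a positive multiple of both $\boldsymbol u$ and $\boldsymbol u'$, so $\boldsymbol u=\boldsymbol u'$.

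The remaining situation, $\boldsymbol w=0$, is the main obstacle. All interaction between $Z$ and $X\cup Y$ is $\langle\beta x(X)\boldsymbol u+\gamma y(Y)\boldsymbol u',\boldsymbol w\rangle$ evaluated at the rotated $Z$-phases. Rotating $Z$ rigidly rotates $\boldsymbol w$, which stays $0$, and leaves the energy inside $Z$ unchanged. Hence $\boldsymbol 1_Z$ is a flat direction, and it is nontrivial because $X\neq\emptyset$; this contradicts non-flatness directly.

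For a local minimum I would use this constant-energy path instead. Equivalently, rotate $X$ alone: its only coupling is to $Z$ through $\boldsymbol w=0$, so this is also flat. Every point $\boldsymbol\theta+t\boldsymbol 1_X$ has the minimal value. The aim is to show these points remain local minima, hence first- and second-order points, for all $t$, and then reach a contradiction. One route is to choose $t$ so that $\theta_X+t$ is generic and evaluate the SOC with test vectors $\boldsymbol e_k$ or $\boldsymbol 1_{X}$ plus a $Z$-perturbation. Averaging the $X$--$Z$ diagonal Hessian contributions over $t$ should then force $\beta=0$, contradicting connectedness.

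Justifying that local minimality propagates along the whole flat circle is the delicate point. I would first try to argue it via a local perturbation argument near the flat orbit. Failing that, I would try to pick a second-order-violating direction at $\boldsymbol\theta$ itself, built from $\boldsymbol 1_Z$ together with a tangential perturbation of $\boldsymbol w$.
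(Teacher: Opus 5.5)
Steps 1--3 are correct, as are Step 4 when $\boldsymbol w\neq\boldsymbol 0$ and the non-flat branch when $\boldsymbol w=\boldsymbol 0$. Your handling of $\alpha>0$ differs from the paper's. The paper treats this in two lemmas: Lemma~\ref{lem:merge-compatible} rules out $\alpha p(Y)\boldsymbol v_Y+\boldsymbol w_Z=\boldsymbol 0$ via the cuts $Y$ and $Z$ and then eliminates $\boldsymbol w_Z$, and Lemma~\ref{lem:merge-one-sided} covers the one-sided case. Your cone argument, plus the $\boldsymbol 1_X,\boldsymbol 1_Y$ test in the antipodal case, handles every $\alpha>0$ configuration at once, which is a clean alternative.

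The genuine gap is the last subcase: $\alpha=0$, $\boldsymbol w=\boldsymbol 0$, and $\boldsymbol\theta$ a local minimum. You offer only tentative routes there, and your fallback (a direction violating the second-order condition at $\boldsymbol\theta$ itself) provably cannot work. Take $C_4$ with $\boldsymbol\theta=(0,\pi/2,\pi,3\pi/2)$ and $X=\{1\}$, $Y=\{3\}$, $Z=\{2,4\}$. All three cut-ranks equal one, $\alpha=0$, and $\boldsymbol w=\boldsymbol v_2+\boldsymbol v_4=\boldsymbol 0$. The point is a first-order point whose Hessian is the zero matrix, yet $\theta_X\not\equiv\theta_Y\pmod{2\pi}$. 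So first- and second-order information at $\boldsymbol\theta$ alone cannot close this case; local minimality must be used at other points.

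The missing idea is that you never need the whole flat circle, only small $|t|$, where propagation of local minimality is immediate. If $\boldsymbol\theta$ minimizes $E_{\boldsymbol A}$ on an open set $\mathcal U\ni\boldsymbol\theta$, then for small $|t|$ the point $\boldsymbol\theta^t=\boldsymbol\theta+t\boldsymbol 1_Z$ lies in $\mathcal U$ and has the same energy. Hence it also minimizes $E_{\boldsymbol A}$ on $\mathcal U$, is itself a local minimum, and satisfies $\boldsymbol H(t):=\nabla^2E_{\boldsymbol A}(\boldsymbol\theta^t)\succeq0$.

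At $\boldsymbol\theta^t$ the weighted sum over $Z$ is $\boldsymbol Q_t\boldsymbol w=\boldsymbol 0$, with $\boldsymbol Q_t$ the rotation by $t$. Since also $\alpha=0$, you get $\boldsymbol 1_X^\top\boldsymbol H(t)\boldsymbol 1_X=\beta x(X)\langle\boldsymbol u,\boldsymbol Q_t\boldsymbol w\rangle=0$. Positive semidefiniteness upgrades this to $\boldsymbol H(t)\boldsymbol 1_X=\boldsymbol 0$; this is the precise form of your ``$\boldsymbol 1_X$ plus a $Z$-perturbation'' test. Now take $j\in Z$ with $z_j>0$. The $j$th coordinate gives $\beta x(X)z_j\cos(\theta_j+t-\theta_X)=0$ for all small $t$, which is impossible since $\beta x(X)z_j>0$.

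This is the paper's Case~(b) in Lemma~\ref{lem:merge-zero}. No averaging or genericity over the full circle is needed. Propagating local minimality all the way around the circle would also be hard to justify, since a limit of local minima on a level set need not be a local minimum.
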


\begin{proof}
We show that $\theta_X\equiv\theta_Y\pmod{2\pi}$ by considering
three mutually exclusive and exhaustive cases, addressed by the
three auxiliary lemmas proved below.
If $\boldsymbol A[X,Y]=\boldsymbol 0$, connectedness forces both
$\boldsymbol A[X,Z]$ and $\boldsymbol A[Y,Z]$ to be nonzero;
otherwise, $X$ or $Y$ would be disconnected from its complement.
If
$\boldsymbol A[X,Y]\neq\boldsymbol 0$, then at least one of
$\boldsymbol A[X,Z]$ and $\boldsymbol A[Y,Z]$ must be nonzero;
otherwise, $X\cup Y$ would be disconnected from the nonempty
set $Z$, contradicting the connectedness of the support graph.

If \(\boldsymbol A[X,Y]=\boldsymbol 0\), the conclusion follows from
Lemma~\ref{lem:merge-zero}.
If \(\boldsymbol A[X,Y]\neq\boldsymbol 0\) and both
\(\boldsymbol A[X,Z]\) and \(\boldsymbol A[Y,Z]\) are nonzero, the conclusion
follows from Lemma~\ref{lem:merge-compatible}.
In the remaining case, exactly one of
\(\boldsymbol A[X,Z]\) and \(\boldsymbol A[Y,Z]\) is nonzero, and the conclusion
follows from Lemma~\ref{lem:merge-one-sided}.
\end{proof}

We now prove the three auxiliary lemmas used in the proof of
Lemma~\ref{base-weighted}, one for each interaction pattern.
To simplify their proofs,
we introduce the following notation. Since $\boldsymbol\theta$ is synchronized on $X$ and $Y$, let
$\boldsymbol v_X,\boldsymbol v_Y\in\mathbb S^1$ denote their common unit vectors;
thus $\boldsymbol v_i=\boldsymbol v_X$ for $i\in X$, and
$\boldsymbol v_j=\boldsymbol v_Y$ for $j\in Y$.

By symmetry of $\boldsymbol A$,
$\rho_{\boldsymbol A}(X\cup Y)=\rho_{\boldsymbol A}(Z)=1$.
Hence there exist \(\boldsymbol p\in\mathbb R^{X\cup Y}\) and \(\boldsymbol q\in\mathbb R^Z\) such that
\begin{equation}\label{XYZ}
    \boldsymbol A[X\cup Y,Z]=\boldsymbol p\boldsymbol q^\top.
\end{equation}
Since \(A_{ij}\geq 0\) for all \(i,j=1,\ldots,n\), we may choose the factorization in \eqref{XYZ} so that \(p_i,q_j\geq0\) for all \(i\in X\cup Y\) and \(j\in Z\).

Figure~\ref{fig:interaction-cases} illustrates the three interaction
patterns treated in the auxiliary lemmas.

\begin{figure}[H]
\centering
\captionsetup[subfigure]{font=small,labelfont=normalfont,justification=centering,skip=3pt}
\tikzset{
    x=1cm,y=1cm,
    block/.style={circle,draw,fill=white,minimum size=7mm,inner sep=0pt},
    interaction/.style={line width=0.7pt},
    every node/.style={font=\small}
}
\begin{minipage}{14.8cm}
\begin{subfigure}[t]{0.32\linewidth}
\centering
\begin{tikzpicture}
  \path[use as bounding box] (-0.4,-0.35) rectangle (2.9,1.95);
\begin{scope}
    \node[block] (x) at (0,0) {$X$};
    \node[block] (y) at (2.5,0) {$Y$};
    \node[block] (z) at (1.25,1.55) {$Z$};
    \draw[interaction] (x) -- (z) -- (y);
\end{scope}
\end{tikzpicture}
\caption[Zero interaction between $X$ and $Y$]{Zero interaction between $X$ and $Y$\\Lemma~\ref{lem:merge-zero}}
\end{subfigure}\hfill
\begin{subfigure}[t]{0.32\linewidth}
\centering
\begin{tikzpicture}
  \path[use as bounding box] (-0.4,-0.35) rectangle (2.9,1.95);
\begin{scope}
    \node[block] (x) at (0,0) {$X$};
    \node[block] (y) at (2.5,0) {$Y$};
    \node[block] (z) at (1.25,1.55) {$Z$};
    \draw[interaction] (x) -- (y) -- (z) -- (x);
\end{scope}
\end{tikzpicture}
\caption[Nonzero interaction between all three sets]{Nonzero interaction between all three sets\\Lemma~\ref{lem:merge-compatible}}
\end{subfigure}\hfill
\begin{subfigure}[t]{0.32\linewidth}
\centering
\begin{tikzpicture}
  \path[use as bounding box] (-0.4,-0.35) rectangle (2.9,1.95);
\begin{scope}
    \node[block] (x) at (0,0) {$X$};
    \node[block] (y) at (2.5,0) {$Y$};
    \node[block] (z) at (1.25,1.55) {$Z$};
    \draw[interaction] (y) -- (x) -- (z);
\end{scope}
\end{tikzpicture}
\caption[One-sided attachment]{One-sided attachment\\Lemma~\ref{lem:merge-one-sided}}
\end{subfigure}
\end{minipage}
\caption{Each node represents a vertex set.
Two nodes are linked by a line if and only if the support graph
has at least one edge with one endpoint in each of the corresponding vertex sets.
Panel~(c) also covers the case obtained by exchanging $X$ and $Y$.}
\label{fig:interaction-cases}
\end{figure}

\begin{lemma}[Zero interaction between the synchronized blocks $X$ and $Y$]
\label{lem:merge-zero}
Under the assumptions and notation of Lemma~\ref{base-weighted}, suppose that
$\boldsymbol A[X,Y]=\boldsymbol 0$. Then \(\theta_X\equiv\theta_Y\pmod{2\pi}\).
\end{lemma}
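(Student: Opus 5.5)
The plan is to work with the factorization \eqref{XYZ}, $\boldsymbol A[X\cup Y,Z]=\boldsymbol p\boldsymbol q^\top$ with $\boldsymbol p,\boldsymbol q\geq0$. Since $\boldsymbol A[X,Y]=\boldsymbol 0$, connectedness forces $\boldsymbol A[X,Z]=\boldsymbol p_X\boldsymbol q^\top\neq\boldsymbol 0$ and $\boldsymbol A[Y,Z]=\boldsymbol p_Y\boldsymbol q^\top\neq\boldsymbol 0$. Hence $\boldsymbol q\neq\boldsymbol 0$, and $P_X:=p(X)>0$, $P_Y:=p(Y)>0$ by nonnegativity. Set
\[
\boldsymbol Q:=\sum_{j\in Z}q_j\boldsymbol v_j .
\]
The case distinction is whether $\boldsymbol Q$ vanishes.

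\textbf{Case $\boldsymbol Q\neq\boldsymbol 0$.} Consider the cut $(X,X^c)$. Its cut matrix is
\[
\boldsymbol A[X,Y\cup Z]=\boldsymbol p_X\,[\boldsymbol 0^\top\;\;\boldsymbol q^\top],
\]
which has rank one. For this factorization, the weighted phasor sum on $X$ is $P_X\boldsymbol v_X\neq\boldsymbol 0$, and the sum on $X^c$ is exactly $\boldsymbol Q$. Corollary~\ref{lemma:1} (every local minimum is a first- and second-order point) gives $P_X\boldsymbol v_X=\mu\boldsymbol Q$ with $\mu>0$. The same argument on the cut $(Y,Y^c)$ gives $P_Y\boldsymbol v_Y=\mu'\boldsymbol Q$ with $\mu'>0$. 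Thus $\boldsymbol v_X$ and $\boldsymbol v_Y$ are both the unit vector $\boldsymbol Q/|\boldsymbol Q|$, so $\theta_X\equiv\theta_Y\pmod{2\pi}$.

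\textbf{Case $\boldsymbol Q=\boldsymbol 0$.} This is the main obstacle, and the approach is to exhibit a flat direction. Let $\boldsymbol\theta^t:=\boldsymbol\theta+t\boldsymbol 1_X$. The energy splits into three parts:
\begin{itemize}[label=\textbullet,leftmargin=*]
\item terms inside $X$, inside $Y\cup Z$, and between $X$ and $Y$, which are unchanged (the last being zero since $\boldsymbol A[X,Y]=\boldsymbol 0$);
\item the $X$--$Z$ interaction, which equals $-P_X\langle \boldsymbol R_t\boldsymbol v_X,\boldsymbol Q\rangle=0$ for every $t$, where $\boldsymbol R_t$ denotes rotation by angle $t$.
\end{itemize}
Hence $E_{\boldsymbol A}(\boldsymbol\theta^t)=E_{\boldsymbol A}(\boldsymbol\theta)$ for all $t$, and $\boldsymbol 1_X$ is a flat direction. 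It is nontrivial because $Z\neq\emptyset$. In the non-flat case this is already a contradiction, so this case cannot occur.

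In the local-minimum case, pick a neighborhood $U$ of $\boldsymbol\theta$ on which $E_{\boldsymbol A}\geq E_{\boldsymbol A}(\boldsymbol\theta)$. For $|t|$ small, $\boldsymbol\theta^t$ lies in the interior of $U$ and has the same energy, so it is again a local minimum and in particular satisfies \eqref{eq:foc}. Fix $j\in Z$ and compare the first-order condition at $\boldsymbol\theta^t$ with that at $\boldsymbol\theta$. Only the terms coupling $j$ to $X$ change, so
\[
q_jP_X\bigl(\sin(\theta_j-\theta_X-t)-\sin(\theta_j-\theta_X)\bigr)=0
\]
for all small $t$. Differentiating in $t$ twice, i.e.\ using that both the sine and cosine of $\theta_j-\theta_X$ would have to vanish, forces $q_jP_X=0$. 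This holds for every $j\in Z$, so $\boldsymbol A[X,Z]=\boldsymbol 0$, a contradiction. Therefore $\boldsymbol Q=\boldsymbol 0$ cannot occur in either setting, and the first case yields the lemma.

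The step I expect to need the most care is the passage from "constant energy along $\boldsymbol\theta^t$" to "each $\boldsymbol\theta^t$ is a local minimum". This should follow cleanly from the openness of $U$ once $|t|$ is small enough that $\boldsymbol\theta^t$ stays in its interior.
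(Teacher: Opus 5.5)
Your proof is correct and follows essentially the paper's route. You first rule out $\sum_{j\in Z}q_j\boldsymbol v_j=\boldsymbol 0$ via a constant-energy rotation, which contradicts non-flatness or, for a local minimum, the optimality conditions at the rotated points; then you apply Corollary~\ref{lemma:1} to the cuts $(X,Y\cup Z)$ and $(Y,X\cup Z)$. The only differences are minor: you rotate $X$ instead of $Z$, and in the local-minimum case you use the first-order condition at $\boldsymbol\theta^t$ rather than the paper's Hessian argument ($\boldsymbol 1_X^\top\boldsymbol H(t)\boldsymbol 1_X=0$ forcing $\boldsymbol H(t)\boldsymbol 1_X=\boldsymbol 0$), which is equally valid and slightly lighter.
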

\begin{proof}
Since the support graph is connected and
$\boldsymbol A[X,Y]=\boldsymbol 0$, both $\boldsymbol A[X,Z]$ and
$\boldsymbol A[Y,Z]$ are nonzero. By \eqref{XYZ}, the vectors
$\boldsymbol p_X$ and $\boldsymbol p_Y$ are therefore nonzero and
entrywise nonnegative, so $p(X)>0$ and $p(Y)>0$.

\smallskip
\noindent\textbf{Step 1. We show that $\sum_{j\in Z}q_j\boldsymbol v_j\neq0$.}
Suppose, to the contrary, that
$\boldsymbol w_Z:=\sum_{j\in Z}q_j\boldsymbol v_j=\boldsymbol 0$.
For disjoint vertex sets $U,W$, denote their interaction energy by
\[
E_{U\leftrightarrow W}(\boldsymbol\phi)
:=-\sum_{i\in U}\sum_{j\in W}A_{ij}\cos(\phi_i-\phi_j),
\]
and write
\[
E_U(\boldsymbol\phi)
:=-\frac12\sum_{i,j\in U}A_{ij}\cos(\phi_i-\phi_j)
\]
for the energy of interactions within $U$.
Set $\boldsymbol\theta^t=\boldsymbol\theta+t\boldsymbol 1_Z$,
and let $\boldsymbol Q_t$ be the rotation matrix through angle $t$.
Then the weighted sum on $Z$ remains zero:
\[
\boldsymbol w_Z(t)
:=\sum_{j\in Z}q_j
\begin{pmatrix}\cos(\theta_j+t)\\\sin(\theta_j+t)\end{pmatrix}
=\boldsymbol Q_t\boldsymbol w_Z=\boldsymbol 0.
\]
Expanding the energy according to the partition $X,Y,Z$ gives
\[
\begin{aligned}
E_{\boldsymbol A}(\boldsymbol\theta^t)
&=E_X(\boldsymbol\theta^t)+E_Y(\boldsymbol\theta^t)
  +E_Z(\boldsymbol\theta^t)\\
&\quad+E_{X\leftrightarrow Y}(\boldsymbol\theta^t)
  +E_{X\leftrightarrow Z}(\boldsymbol\theta^t)
  +E_{Y\leftrightarrow Z}(\boldsymbol\theta^t).
\end{aligned}
\]
The first three terms are independent of $t$, since all phase differences
within each set are unchanged. Moreover,
$E_{X\leftrightarrow Y}(\boldsymbol\theta^t)=0$ because
$\boldsymbol A[X,Y]=\boldsymbol 0$. By \eqref{XYZ},
\[
\begin{aligned}
E_{X\leftrightarrow Z}(\boldsymbol\theta^t)
&=-\sum_{i\in X}\sum_{j\in Z}p_iq_j\cos(\theta_i-\theta_j-t)\\
&=-\left\langle\sum_{i\in X}p_i\boldsymbol v_i,
                 \boldsymbol w_Z(t)\right\rangle=0.
\end{aligned}
\]
The same calculation gives
$E_{Y\leftrightarrow Z}(\boldsymbol\theta^t)=0$. Thus
\[
E_{\boldsymbol A}(\boldsymbol\theta^t)
=E_{\boldsymbol A}(\boldsymbol\theta)
\qquad\text{for every }t\in\mathbb R.
\]
\smallskip
\noindent\textit{Case (a): $\boldsymbol\theta$ is a non-flat first- and second-order point.}
The energy identity above contradicts non-flatness because $\boldsymbol 1_Z$ is a
nontrivial flat direction.

\smallskip
\noindent\textit{Case (b): $\boldsymbol\theta$ is a local minimum.}
Since $\boldsymbol\theta$ is a local minimum, there is an open
neighborhood $\mathcal U$ of $\boldsymbol\theta$ on which
$E_{\boldsymbol A}(\boldsymbol\phi)\geq E_{\boldsymbol A}(\boldsymbol\theta)$ for every $\boldsymbol\phi\in\mathcal U$.
For sufficiently small $t$, we have $\boldsymbol\theta^t\in\mathcal U$
and $E_{\boldsymbol A}(\boldsymbol\theta^t)=E_{\boldsymbol A}(\boldsymbol\theta)$.
Thus $\boldsymbol\theta^t$ also minimizes the energy on $\mathcal U$,
so it is a local minimum. Consequently,
$\boldsymbol H(t):=\nabla^2E_{\boldsymbol A}(\boldsymbol\theta^t)\succeq0$.
Since $\boldsymbol A[X,Y]=\boldsymbol 0$ and $\boldsymbol w_Z(t)=\boldsymbol 0$,
\[
\begin{aligned}
\boldsymbol 1_X^\top\boldsymbol H(t)\boldsymbol 1_X
&=\sum_{i\in X}\sum_{j\in Z}p_iq_j\cos(\theta_X-\theta_j-t)\\
&=p(X)\langle\boldsymbol v_X,\boldsymbol w_Z(t)\rangle=0.
\end{aligned}
\]
Since $\boldsymbol H(t)\succeq0$, the vanishing quadratic form implies
$\boldsymbol H(t)\boldsymbol 1_X=\boldsymbol 0$
for every sufficiently small $t$.
Since $\boldsymbol A[X,Z]\neq\boldsymbol 0$, we have $p(X)>0$
and can fix $j\in Z$ with $q_j>0$.
The $j$th coordinate of this identity gives
\[
0=(\boldsymbol H(t)\boldsymbol 1_X)_j
=-\sum_{i\in X}p_iq_j\cos(\theta_j+t-\theta_X)
=-p(X)q_j\cos(\theta_j+t-\theta_X).
\]
Since $p(X)q_j>0$, it follows that
\[
\cos(\theta_j+t-\theta_X)=0
\qquad\text{for every sufficiently small }t.
\]
This is impossible because the cosine function does not vanish on
any interval.
Hence $\boldsymbol w_Z\neq\boldsymbol 0$ in both cases.

\smallskip
\noindent\textbf{Step 2. We show that $\sum_{j\in Z}q_j\boldsymbol v_j=\lambda_X\boldsymbol v_X$ for some $\lambda_X>0$.}

Since \(\rho_{\boldsymbol A}(X)=1\), the cut matrix
\(\boldsymbol A[X,Y\cup Z]\) has rank one. By \eqref{XYZ},
we have $$\boldsymbol A[X,Z]=\boldsymbol p_X\boldsymbol q^\top.$$
Together with
\(\boldsymbol A[X,Y]=\boldsymbol 0\), this gives
\begin{equation}\label{X,YZ}
\begin{aligned}
\boldsymbol A[X,Y\cup Z]
&=
\bigl[\boldsymbol A[X,Y]\;\; \boldsymbol A[X,Z]\bigr] \\
&=
\bigl[\boldsymbol 0\;\; \boldsymbol p_X\boldsymbol q^\top\bigr]
=
\boldsymbol p_X
\begin{bmatrix}
\boldsymbol 0_Y^\top & \boldsymbol q^\top
\end{bmatrix}.
\end{aligned}
\end{equation}

By Step~1, $\sum_{j\in Z}q_j\boldsymbol v_j\neq\boldsymbol 0$.
Moreover, since \(\boldsymbol\theta\) is synchronized on \(X\),
\[
\sum_{i\in X}p_i\boldsymbol v_i
=
p(X)\boldsymbol v_X.
\]
Since $p(X)>0$ and $\boldsymbol v_X\neq\boldsymbol 0$,
this weighted sum is nonzero.

By Corollary~\ref{lemma:1}, since both
$\sum_{i\in X}p_i\boldsymbol v_i$ and
$\sum_{j\in Z}q_j\boldsymbol v_j$ are nonzero, the factorization
\eqref{X,YZ} gives a scalar $\mu>0$ such that
\[
p(X)\boldsymbol v_X
=
\mu\sum_{j\in Z}q_j\boldsymbol v_j.
\]
Since $p(X)>0$ and $\mu>0$, we conclude that
\begin{equation}\label{A}
\sum_{j\in Z}q_j\boldsymbol v_j=\lambda_X\boldsymbol v_X,
\qquad \lambda_X:=\frac{p(X)}{\mu}>0.
\end{equation}
\smallskip
\noindent\textbf{Step 3. We show that $\sum_{j\in Z}q_j\boldsymbol v_j=\lambda_Y\boldsymbol v_Y$ for some $\lambda_Y>0$.}
Similarly, switching the roles of $X$ and $Y$ in Step~2, we obtain
\begin{equation}\label{B}
\sum_{j\in Z}q_j\boldsymbol v_j=\lambda_Y\boldsymbol v_Y
\qquad\text{for some }\lambda_Y>0.
\end{equation}
\smallskip
\noindent\textbf{Step 4. We show that $\boldsymbol\theta$ is synchronized on $X\cup Y$.}
From \eqref{A}, \eqref{B}, and
$\|\boldsymbol v_X\|=\|\boldsymbol v_Y\|=1$, we obtain
$\lambda_X=\lambda_Y$ and hence $\boldsymbol v_X=\boldsymbol v_Y$.
Thus $\theta_X\equiv\theta_Y\pmod{2\pi}$.

\end{proof}

\begin{lemma}[Nonzero interaction between all three sets]
\label{lem:merge-compatible}
Under the assumptions and notation of Lemma~\ref{base-weighted}, suppose that
$\boldsymbol A[X,Y]\neq\boldsymbol 0$,
$\boldsymbol A[X,Z]\neq\boldsymbol 0$, and
$\boldsymbol A[Y,Z]\neq\boldsymbol 0$. Then \(\theta_X\equiv\theta_Y\pmod{2\pi}\).
\end{lemma}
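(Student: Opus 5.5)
The plan is to first use the three rank-one cut conditions to obtain a single compatible factorization of all three off-diagonal blocks. Then I apply the first- and second-order conditions across the cuts $(X,X^c)$ and $(Y,Y^c)$. Each gives a positive-direction relation between $\boldsymbol v_X$ or $\boldsymbol v_Y$ and a weighted phasor sum involving $Z$, and comparing the two relations forces $\boldsymbol v_X=\boldsymbol v_Y$. I expect that neither local minimality nor non-flatness is needed in this case; only the first- and second-order conditions enter.

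\textbf{Step 1 (compatible factorization).} Start from \eqref{XYZ}. Since $\boldsymbol A[X,Z]=\boldsymbol p_X\boldsymbol q^\top\neq\boldsymbol 0$ and $\boldsymbol A[X,Y\cup Z]$ has rank one, Lemma~\ref{lem:rank-one-extension} gives $\boldsymbol f\in\mathbb R^Y$ with $\boldsymbol A[X,Y]=\boldsymbol p_X\boldsymbol f^\top$. The same argument for the cut $(Y,Y^c)$ gives $\boldsymbol g\in\mathbb R^X$ with $\boldsymbol A[Y,X]=\boldsymbol p_Y\boldsymbol g^\top$. By symmetry $\boldsymbol p_X\boldsymbol f^\top=\boldsymbol g\boldsymbol p_Y^\top\neq\boldsymbol 0$, so Lemma~\ref{lem:rank-one-uniqueness} yields $\alpha\neq0$ with $\boldsymbol p_X=\alpha\boldsymbol g$ and $\boldsymbol p_Y=\alpha\boldsymbol f$. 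Hence $\boldsymbol A[X,Y]=c\,\boldsymbol p_X\boldsymbol p_Y^\top$ with $c=\alpha^{-1}$. Because $\boldsymbol p_X,\boldsymbol p_Y\geq0$ are nonzero and $\boldsymbol A[X,Y]\geq0$ is nonzero, we get $c>0$. Moreover $p(X),p(Y)>0$, since $\boldsymbol A[X,Z]$ and $\boldsymbol A[Y,Z]$ are nonzero.

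\textbf{Step 2 (cut $X$).} Now $\boldsymbol A[X,Y\cup Z]=\boldsymbol p_X\bigl[c\,\boldsymbol p_Y^\top\;\;\boldsymbol q^\top\bigr]$. The weighted sum on the $X$ side is $p(X)\boldsymbol v_X\neq\boldsymbol 0$. With $\boldsymbol w_Z:=\sum_{j\in Z}q_j\boldsymbol v_j$, the sum on the other side is $c\,p(Y)\boldsymbol v_Y+\boldsymbol w_Z$. If this vector is nonzero, Corollary~\ref{lemma:1} shows it is a positive multiple of $\boldsymbol v_X$. In every case we therefore have
\[
c\,p(Y)\boldsymbol v_Y+\boldsymbol w_Z=\lambda\boldsymbol v_X\quad\text{for some }\lambda\geq0.
\]

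\textbf{Step 3 (cut $Y$).} Symmetrically, the cut $(Y,Y^c)$ gives
\[
c\,p(X)\boldsymbol v_X+\boldsymbol w_Z=\kappa\boldsymbol v_Y\quad\text{for some }\kappa\geq0.
\]

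\textbf{Step 4 (conclusion).} Subtracting the two identities eliminates $\boldsymbol w_Z$ and gives
\[
\bigl(\lambda+c\,p(X)\bigr)\boldsymbol v_X=\bigl(\kappa+c\,p(Y)\bigr)\boldsymbol v_Y.
\]
Both coefficients are strictly positive and $\|\boldsymbol v_X\|=\|\boldsymbol v_Y\|=1$, so the coefficients are equal and $\boldsymbol v_X=\boldsymbol v_Y$, that is, $\theta_X\equiv\theta_Y\pmod{2\pi}$.

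The main obstacle is Step 1. Getting a single positive scalar $c$ tying the three blocks together is what makes the $Y$-contribution on the far side of cut $X$ proportional to $p(Y)\boldsymbol v_Y$, and symmetrically on cut $Y$. Without it the two relations could not be compared. The sign $c>0$ relies on nonnegativity of $\boldsymbol A$, and the case where a phasor sum vanishes is absorbed by allowing $\lambda,\kappa\geq0$.
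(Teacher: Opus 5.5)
Your proof is correct and follows the paper's argument step for step: the same compatible factorization $\boldsymbol A[X,Y]=c\,\boldsymbol p_X\boldsymbol p_Y^\top$ with $c>0$ obtained from Lemmas~\ref{lem:rank-one-extension} and~\ref{lem:rank-one-uniqueness}, then Corollary~\ref{lemma:1} on the cuts $(X,Y\cup Z)$ and $(Y,X\cup Z)$, then elimination of $\boldsymbol w_Z$. The only difference is in your favor. The paper spends a paragraph proving $\alpha p(Y)\boldsymbol v_Y+\boldsymbol w_Z\neq\boldsymbol 0$, by adding the inequalities of Corollary~\ref{coro:second-order-module} on the cuts $(Y,X\cup Z)$ and $(Z,X\cup Y)$. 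You correctly observe that allowing $\lambda,\kappa\geq0$ suffices, because $\lambda+c\,p(X)$ and $\kappa+c\,p(Y)$ are positive anyway, so that step is redundant.
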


\begin{proof}

\noindent\textbf{Step 1. We show that $\boldsymbol A[X,Y]=\boldsymbol p_X\boldsymbol f^\top$ for some $\boldsymbol f\in\mathbb R^Y$.}

By \eqref{XYZ}, we have
$\boldsymbol A[X,Z]=\boldsymbol p_X\boldsymbol q^\top
\neq\boldsymbol 0$.
Since $\rho_{\boldsymbol A}(X)=1$, the submatrix
$\boldsymbol A[X,Y\cup Z]
=\bigl[\boldsymbol A[X,Y]\;\;\boldsymbol A[X,Z]\bigr]$
has rank one.
Applying Lemma~\ref{lem:rank-one-extension}, we obtain
a vector $\boldsymbol f\in\mathbb R^Y$ such that
\begin{equation}\label{ha}
\boldsymbol A[X,Y]=\boldsymbol p_X\boldsymbol f^\top.
\end{equation}
Consequently,
\[
\boldsymbol A[X,Y\cup Z]
=
\boldsymbol p_X
\begin{bmatrix}
\boldsymbol f^\top & \boldsymbol q^\top
\end{bmatrix}.
\]

\noindent\textbf{Step 2. We show that $\boldsymbol A[Y,X]=\boldsymbol p_Y\boldsymbol e^\top$ for some $\boldsymbol e\in\mathbb R^X$.}

Similarly, switching the roles of $X$ and $Y$ in Step~1, there exists
$\boldsymbol e\in\mathbb R^X$ such that
\begin{equation}\label{ha2}
\boldsymbol A[Y,X]=\boldsymbol p_Y\boldsymbol e^\top.
\end{equation}
Consequently,
\[
\boldsymbol A[Y,X\cup Z]
=
\boldsymbol p_Y
\begin{bmatrix}
\boldsymbol e^\top & \boldsymbol q^\top
\end{bmatrix}.
\]

\noindent\textbf{Step 3. We show that $\boldsymbol f=\alpha\boldsymbol p_Y$ and $\boldsymbol e=\alpha\boldsymbol p_X$ for some $\alpha>0$.}

By \eqref{ha} and \eqref{ha2}, and by symmetry of $\boldsymbol A$, we have
$\boldsymbol A[Y,X]=\boldsymbol A[X,Y]^\top$. Thus,
$\boldsymbol p_Y\boldsymbol e^\top=\boldsymbol f\boldsymbol p_X^\top$.
Applying Lemma~\ref{lem:rank-one-uniqueness}, we obtain
a scalar $\alpha\neq 0$ such that
\[
\boldsymbol f=\alpha\boldsymbol p_Y,
\qquad
\boldsymbol e=\alpha\boldsymbol p_X.
\]
Consequently,
\[
\boldsymbol A[X,Y]
=
\boldsymbol p_X\boldsymbol f^\top
=
\alpha\boldsymbol p_X\boldsymbol p_Y^\top.
\]
Since $\boldsymbol p_X$ and $\boldsymbol p_Y$ are
nonzero and entrywise nonnegative, and
$\boldsymbol A[X,Y]$ is entrywise nonnegative,
it follows that $\alpha>0$.
We can therefore write the two cut matrices as
\begin{equation}\label{eq:compatible-cut-factorizations}
\begin{aligned}
\boldsymbol A[X,Y\cup Z]
&=
\boldsymbol p_X
\begin{bmatrix}
\alpha\boldsymbol p_Y^\top & \boldsymbol q^\top
\end{bmatrix},\\
\boldsymbol A[Y,X\cup Z]
&=
\boldsymbol p_Y
\begin{bmatrix}
\alpha\boldsymbol p_X^\top & \boldsymbol q^\top
\end{bmatrix}.
\end{aligned}
\end{equation}

\smallskip
\noindent\textbf{Step 4. We show that $\boldsymbol\theta$ is synchronized on $X\cup Y$.}

For the cut $(X,Y\cup Z)$ with the factorization in
\eqref{eq:compatible-cut-factorizations},
the two weighted sums are
\[
\sum_{i\in X}p_i\boldsymbol v_i
=p(X)\boldsymbol v_X
\qquad\text{and}\qquad
\alpha\sum_{j\in Y}p_j\boldsymbol v_j
+\sum_{k\in Z}q_k\boldsymbol v_k
=\alpha p(Y)\boldsymbol v_Y+\sum_{k\in Z}q_k\boldsymbol v_k.
\]
Here we used $\boldsymbol v_i=\boldsymbol v_X$ for $i\in X$ and
$\boldsymbol v_j=\boldsymbol v_Y$ for $j\in Y$, since
$\boldsymbol\theta$ is synchronized on each of $X$ and $Y$, together
with $p(X)=\sum_{i\in X}p_i$ and $p(Y)=\sum_{j\in Y}p_j$.
By Corollary~\ref{lemma:1}, if both sums are nonzero, they point in
the same direction. The first sum is nonzero because $p(X)>0$ and
$\boldsymbol v_X\neq\boldsymbol 0$.

We next show that the second sum is nonzero. Set
$\boldsymbol w_Z:=\sum_{k\in Z}q_k\boldsymbol v_k$.
Suppose, to the contrary, that
$\alpha p(Y)\boldsymbol v_Y+\boldsymbol w_Z=\boldsymbol 0$, so that
$\boldsymbol w_Z=-\alpha p(Y)\boldsymbol v_Y$.
Applying Corollary~\ref{coro:second-order-module} to the cuts
$(Y,X\cup Z)$ and $(Z,X\cup Y)$, using
\eqref{eq:compatible-cut-factorizations} and \eqref{XYZ}, respectively, gives
\[
\begin{aligned}
0
&\leq\left\langle p(Y)\boldsymbol v_Y,
                 \alpha p(X)\boldsymbol v_X+\boldsymbol w_Z\right\rangle\\
&=\alpha p(Y)\bigl(p(X)\langle\boldsymbol v_X,\boldsymbol v_Y\rangle-p(Y)\bigr),\\
0
&\leq\left\langle\boldsymbol w_Z,
                 p(X)\boldsymbol v_X+p(Y)\boldsymbol v_Y\right\rangle\\
&=-\alpha p(Y)\bigl(p(X)\langle\boldsymbol v_X,\boldsymbol v_Y\rangle+p(Y)\bigr).
\end{aligned}
\]
Adding these inequalities yields $0\leq-2\alpha p(Y)^2<0$, a contradiction.
Thus the second sum is also nonzero.
Applying Corollary~\ref{lemma:1} to these two nonzero sums
and using $p(X)>0$, we obtain a scalar $\lambda_X>0$ such that
\begin{equation}\label{eq:lambdaX}
\lambda_X\boldsymbol v_X
=
\alpha p(Y)\boldsymbol v_Y
+
\sum_{k\in Z}q_k\boldsymbol v_k.
\end{equation}
Similarly, applying the same argument to the cut $(Y,X\cup Z)$
gives a scalar $\lambda_Y>0$ such that
\begin{equation}\label{eq:lambdaY}
\lambda_Y\boldsymbol v_Y
=
\alpha p(X)\boldsymbol v_X
+
\sum_{k\in Z}q_k\boldsymbol v_k.
\end{equation}

Eliminating the common term
$\sum_{k\in Z}q_k\boldsymbol v_k$ from
\eqref{eq:lambdaX} and \eqref{eq:lambdaY}, we obtain
\[
\left(
\lambda_X+\alpha p(X)
\right)\boldsymbol v_X
=
\left(
\lambda_Y+\alpha p(Y)
\right)\boldsymbol v_Y.
\]
Since $\alpha>0$, $p(X)>0$, $p(Y)>0$,
$\lambda_X>0$, and $\lambda_Y>0$, both coefficients are strictly positive.
Therefore, $\boldsymbol v_X=\boldsymbol v_Y$,
and hence $\theta_X\equiv\theta_Y\pmod{2\pi}$.

\end{proof}

\begin{lemma}[One-sided attachment]
\label{lem:merge-one-sided}
Under the assumptions and notation of Lemma~\ref{base-weighted}, suppose that
$\boldsymbol A[X,Y]\neq\boldsymbol 0$ and exactly one of
\(\boldsymbol A[X,Z]\) and \(\boldsymbol A[Y,Z]\) is nonzero.
Then \(\theta_X\equiv\theta_Y\pmod{2\pi}\).
\end{lemma}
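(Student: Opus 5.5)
By the symmetry of the hypotheses in $X$ and $Y$, I would first reduce to the case $\boldsymbol A[X,Z]\neq\boldsymbol 0$ and $\boldsymbol A[Y,Z]=\boldsymbol 0$. The other case is identical with the roles of $X$ and $Y$ exchanged, since both are synchronized and both satisfy $\rho_{\boldsymbol A}=1$. In this reduced case the block $Y$ interacts with the rest of the graph only through $X$. The plan is to apply Corollary~\ref{lemma:1} directly to the rank-one cut $(Y,X\cup Z)$, without using $Z$ at all.

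\textbf{Step 1: a nonnegative factorization of the $Y$-cut.} Since $\boldsymbol A[Y,Z]=\boldsymbol 0$, we have
\[
\boldsymbol A[Y,X\cup Z]=\bigl[\boldsymbol A[Y,X]\;\;\boldsymbol 0\bigr],
\]
and this matrix has rank one because $\rho_{\boldsymbol A}(Y)=1$. Hence $\boldsymbol A[Y,X]$ is a nonzero rank-one matrix, so $\boldsymbol A[Y,X]=\boldsymbol g\boldsymbol h^\top$ for some $\boldsymbol g\in\mathbb R^Y$ and $\boldsymbol h\in\mathbb R^X$. This gives
\[
\boldsymbol A[Y,X\cup Z]=\boldsymbol g\begin{bmatrix}\boldsymbol h^\top & \boldsymbol 0_Z^\top\end{bmatrix},
\]
which is a valid factorization for Corollary~\ref{lemma:1}.

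The point needing care is the sign choice. Fix an entry $g_{j_0}h_{i_0}>0$ and replace $(\boldsymbol g,\boldsymbol h)$ by $(-\boldsymbol g,-\boldsymbol h)$ if necessary, so that $g_{j_0}>0$ and $h_{i_0}>0$. Row $j_0$ equals $g_{j_0}\boldsymbol h^\top$ and is entrywise nonnegative, so $\boldsymbol h\geq 0$. Column $i_0$ equals $h_{i_0}\boldsymbol g$ and is entrywise nonnegative, so $\boldsymbol g\geq 0$. This is the same device the paper uses for \eqref{XYZ}. Consequently $g(Y)>0$ and $h(X)>0$.

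\textbf{Step 2: the two phasor sums.} Because $\boldsymbol\theta$ is synchronized on $Y$ and on $X$, and the $Z$-coordinates of the second factor vanish, the two weighted sums for this cut are
\[
\sum_{j\in Y}g_j\boldsymbol v_j=g(Y)\boldsymbol v_Y,
\qquad
\sum_{i\in X}h_i\boldsymbol v_i=h(X)\boldsymbol v_X.
\]
Both are nonzero, because their coefficients are strictly positive and $\boldsymbol v_X,\boldsymbol v_Y$ are unit vectors. Here there is no vanishing-sum obstacle of the kind handled in Lemma~\ref{lem:merge-zero}.

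\textbf{Step 3: conclusion.} Every local minimum is a first- and second-order point, and so is every non-flat first- and second-order point. Corollary~\ref{lemma:1} therefore applies in both cases and gives $\mu>0$ with
\[
g(Y)\boldsymbol v_Y=\mu\,h(X)\boldsymbol v_X .
\]
Taking norms shows that the positive coefficients coincide, so $\boldsymbol v_X=\boldsymbol v_Y$, that is, $\theta_X\equiv\theta_Y\pmod{2\pi}$.

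I expect no real obstacle. The only points to check are the sign normalization of the factorization in Step 1 and that the zero $Z$-block is compatible with the form \eqref{eq:rank-one-cut-factorization} required by Corollary~\ref{lemma:1}.
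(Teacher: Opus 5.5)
Your proposal is correct and matches the paper's proof: reduce to $\boldsymbol A[Y,Z]=\boldsymbol 0$, factor $\boldsymbol A[Y,X]$ as a nonnegative rank-one product, and apply Corollary~\ref{lemma:1} to the cut $(Y,X\cup Z)$, whose two weighted sums are positive multiples of $\boldsymbol v_Y$ and $\boldsymbol v_X$. Your explicit sign-normalization argument, which the paper asserts without comment, is also correct.
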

\begin{proof}
By symmetry, we may assume that
\[
\boldsymbol A[X,Z]\neq\boldsymbol 0,
\qquad
\boldsymbol A[Y,Z]=\boldsymbol 0.
\]
Since \(\rho_{\boldsymbol A}(Y)=1\), the cut matrix
\(\boldsymbol A[Y,X\cup Z]\) has rank one. Moreover,
\[
\boldsymbol A[Y,X\cup Z]
=
\bigl[\boldsymbol A[Y,X]\;\;\boldsymbol 0\bigr],
\]
and \(\boldsymbol A[Y,X]\neq\boldsymbol 0\). Hence we may write
\[
\boldsymbol A[Y,X]
=
\boldsymbol a\boldsymbol b^\top,
\]
where \(\boldsymbol a\in\mathbb R^Y\) and
\(\boldsymbol b\in\mathbb R^X\) are nonzero and entrywise
nonnegative. Thus \[\boldsymbol A[Y,X\cup Z]
=
\boldsymbol a
\begin{bmatrix}
\boldsymbol b^\top&\boldsymbol 0_Z^\top
\end{bmatrix}.\]

For the cut \((Y,X\cup Z)\), the corresponding weighted sums are
\[
\sum_{i\in Y}a_i\boldsymbol v_i
\qquad\text{and}\qquad
\sum_{j\in X}b_j\boldsymbol v_j.
\]
Since \(\boldsymbol\theta\) is synchronized on both \(X\) and \(Y\),
\[
\sum_{i\in Y}a_i\boldsymbol v_i
=
a(Y)\boldsymbol v_Y,
\qquad
\sum_{j\in X}b_j\boldsymbol v_j
=
b(X)\boldsymbol v_X.
\]
Both coefficients are strictly positive, so both weighted sums
are nonzero. Corollary~\ref{lemma:1} therefore implies that they
point in the same direction. Hence $\boldsymbol v_X=\boldsymbol v_Y$,
and consequently $\theta_X\equiv\theta_Y\pmod{2\pi}$.
\end{proof}

\subsection{Proof of Theorem~\ref{main}}
\label{sec:main-proof}
Figure~\ref{fig:tree-labels} illustrates the tree labeling used in the
proof below.

\begin{figure}[H]
\centering
\begin{tikzpicture}[
    x=1cm,y=1cm,
    line cap=round,
    every node/.style={font=\small},
    vertex/.style={circle,fill=black,draw=black,inner sep=1.6pt},
    leaf label/.style={font=\small,inner sep=2pt},
    set label/.style={font=\small,align=center,inner sep=1pt},
    tree edge/.style={draw=black,line width=0.55pt}
]
  \coordinate (u) at (-1.7,0.6);
  \coordinate (v) at ( 1.7,0.6);
  \coordinate (t) at (-4.3,-0.7);
  \coordinate (w) at ( 4.3,-0.7);
  \coordinate (l1) at (-6.2,0.0);
  \coordinate (l2) at (-6.2,-1.7);
  \coordinate (l3) at (-4.3,1.5);
  \coordinate (l4) at ( 4.3,1.5);
  \coordinate (l5) at ( 6.2,0.0);
  \coordinate (l6) at ( 6.2,-1.7);

  \foreach \child/\parent in {l1/t,l2/t,t/u,l3/u,l4/v,l5/w,l6/w,w/v}
    \draw[tree edge] (\child)--(\parent);
  \draw[draw=black,line width=1.2pt] (u)--(v)
    node[midway,above=4pt,inner sep=1pt] {$e$};

  \foreach \name in {u,v,t,w,l1,l2,l3,l4,l5,l6}
    \node[vertex] at (\name) {};

  \node[set label] at (-1.7,1.65) {$u$\\[-1pt]$S_u=\{1,2,3\}$};
  \node[set label] at ( 1.7,1.65) {$v$\\[-1pt]$S_v=\{4,5,6\}$};
  \node[set label,anchor=west] at (-3.95,-1.02)
    {$t:\ S_t=\{1,2\}$};
  \node[set label,anchor=east] at ( 3.95,-1.02)
    {$w:\ S_w=\{5,6\}$};

  \node[leaf label,above left=2pt] at (l1) {$\{1\}$};
  \node[leaf label,below left=2pt] at (l2) {$\{2\}$};
  \node[leaf label,above=3pt] at (l3) {$\{3\}$};
  \node[leaf label,above=3pt] at (l4) {$\{4\}$};
  \node[leaf label,above right=2pt] at (l5) {$\{5\}$};
  \node[leaf label,below right=2pt] at (l6) {$\{6\}$};
\end{tikzpicture}
\caption{Leaf-set labels after choosing the edge $e=uv$
(thick edge). Each leaf displays its associated singleton set.
The sets $S_u$ and $S_v$ partition $V$, and each internal
label is the union of its two child labels; for example,
$S_u=S_t\cup\{3\}$.}
\label{fig:tree-labels}
\end{figure}

\begin{proof}
Assume $\boldsymbol\theta$ is a local minimum or a non-flat
first- and second-order point of
$E_{\boldsymbol A}(\boldsymbol\theta)$.
In either case, it satisfies \eqref{eq:foc} and \eqref{eq:soc}.
The support graph of $\boldsymbol A$ is connected by assumption.

Let $(T,L)$ be a rank-decomposition of $\boldsymbol A$ of width one.
Choose an arbitrary edge $e=uv$ of $T$.
Deleting $e$ from $T$ produces two subtrees $T_u$ and $T_v$,
which we view as rooted at $u$ and $v$, respectively.
Let $S_u,S_v\subseteq V$ be the sets corresponding, via $L$,
to the leaves of these rooted trees. Thus
$V=S_u\mathbin{\dot\cup}S_v$.

We now label each node $t$ of $T_u$ by a set $S_t\subseteq V$.
For $t\neq u$, let $e_t$ be the unique edge incident to $t$
on the path from $t$ to $u$.
Deleting $e_t$ from $T_u$ produces a component containing $t$;
view this component as a tree $T_t$ rooted at $t$.
Let $S_t$ be the set of vertices corresponding, via $L$,
to the leaves of $T_t$.
In particular, if $t=L(i)$ is a leaf of $T$, then $S_t=\{i\}$.
Apply the same procedure to the nodes of $T_v$.
Here a rooted tree consisting of a single node has that node as its leaf.

The cut $(S_t,V\setminus S_t)$ is induced by $e$ when
$t\in\{u,v\}$, and by $e_t$ otherwise.
Since $(T,L)$ has width one, $\rho_{\boldsymbol A}(S_t)\leq1$.
Connectedness of the support graph and
$\varnothing\neq S_t\subsetneq V$ imply that the corresponding
cut matrix is nonzero. Hence $\rho_{\boldsymbol A}(S_t)=1$
for every node $t$ of $T$.

We first show that $\boldsymbol\theta$ is synchronized on $S_t$
for every node $t$, by induction on $|S_t|$.
If $|S_t|=1$, there is nothing to prove.
Assume $|S_t|\geq2$. Then $t$ is not a leaf; let $t_1,t_2$
be its two children. By construction,
$S_t=S_{t_1}\mathbin{\dot\cup}S_{t_2}$, so
$S_{t_1},S_{t_2},V\setminus S_t$ partition $V$ into three
nonempty sets.
By the induction hypothesis, $\boldsymbol\theta$ is synchronized
on $S_{t_1}$ and on $S_{t_2}$.
Since
\[
\rho_{\boldsymbol A}(S_{t_1})
=\rho_{\boldsymbol A}(S_{t_2})
=\rho_{\boldsymbol A}(V\setminus S_t)
=\rho_{\boldsymbol A}(S_t)=1,
\]
Lemma~\ref{base-weighted} implies that $\boldsymbol\theta$
is synchronized on $S_{t_1}\cup S_{t_2}=S_t$.
This completes the induction.

It remains to show that the phases on $S_u$ and $S_v$ agree.
Let $\boldsymbol w_u,\boldsymbol w_v$ be their respective common unit vectors.
The cut matrix $\boldsymbol A[S_u,S_v]$ is nonzero, entrywise nonnegative,
and of rank one, so write
\[
\boldsymbol A[S_u,S_v]=\boldsymbol p\boldsymbol q^\top
\]
with nonzero, entrywise nonnegative vectors
$\boldsymbol p\in\mathbb R^{S_u}$ and $\boldsymbol q\in\mathbb R^{S_v}$.
The corresponding weighted sums are
\[
\sum_{i\in S_u}p_i\boldsymbol v_i=p(S_u)\boldsymbol w_u,
\qquad
\sum_{j\in S_v}q_j\boldsymbol v_j=q(S_v)\boldsymbol w_v.
\]
Both are nonzero because $p(S_u),q(S_v)>0$.
By Corollary~\ref{lemma:1}, they point in the same direction.
Since $\boldsymbol w_u$ and $\boldsymbol w_v$ are unit vectors,
they are equal. As $S_u\cup S_v=V$, it follows that
\[
\theta_1\equiv\cdots\equiv\theta_n\pmod{2\pi}.
\]
\end{proof}

\section{Sharpness and examples}
\label{sec:examples}
\subsection{Sharpness in the unweighted case}
\label{sec:sharpness}

Neither conclusion of Corollary~\ref{cor:distance-hereditary} extends
to all unweighted graphs of real rank-width two, as witnessed by every
cycle $C_n$ with $n\geq5$.
We use the standard twisted-state construction on rings;
see \cite{WileyStrogatzGirvan2006} for background.

\begin{proposition}
\label{prop:cycles}
Let $n\geq5$ and let $\boldsymbol A\in\{0,1\}^{n\times n}$ be the adjacency matrix of the cycle $C_n$.
Then
\[
\operatorname{rw}_{\mathbb R}(\boldsymbol A)=2.
\]
Moreover, there exists a nonsynchronized configuration $\boldsymbol\theta$
that is both a local minimum and a non-flat first- and second-order point
of $E_{\boldsymbol A}(\boldsymbol\theta)$.
The corresponding equilibrium of \eqref{kuramoto} is locally
asymptotically stable modulo global rotation.
\end{proposition}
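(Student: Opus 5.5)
The rank-width claim is already settled by Example~\ref{ex:cycle-rank-width}, so I would simply cite it. The remaining work is to analyze the twisted state $\theta_k=2\pi(k-1)/n$, $k=1,\ldots,n$, with indices taken cyclically. All consecutive phase differences equal $\delta:=2\pi/n$.

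\textbf{Stationarity.} Each vertex $k$ has the two neighbors $k\pm1$. The first-order condition \eqref{eq:foc} at vertex $k$ reads $\sin(-\delta)+\sin(\delta)=0$, so $\boldsymbol\theta$ is a first-order point. It is nonsynchronized because $\delta\not\equiv0\pmod{2\pi}$.

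\textbf{Hessian.} By \eqref{hessian}, the Hessian is $\boldsymbol H=\cos\delta\cdot\boldsymbol L_{C_n}$, where $\boldsymbol L_{C_n}$ is the Laplacian of the unweighted cycle. Its eigenvalues are $2\cos\delta\,(1-\cos(2\pi m/n))$ for $m=0,\ldots,n-1$. For $n\ge5$ we have $\delta\le 2\pi/5<\pi/2$, so $\cos\delta>0$. Hence $\boldsymbol H\succeq0$, and $\ker\boldsymbol H=\mathbb R\boldsymbol 1$ because $\boldsymbol L_{C_n}$ of a connected graph has rank $n-1$. Lemma~\ref{lem:flat-hessian} then shows that $\boldsymbol\theta$ is a non-flat first- and second-order point.

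\textbf{Local minimality and stability.} The function $E_{\boldsymbol A}$ is invariant under $\boldsymbol\theta\mapsto\boldsymbol\theta+c\boldsymbol 1$. I would therefore restrict it to the affine slice $\boldsymbol\theta+\boldsymbol 1^\perp$. On this slice the gradient vanishes at $\boldsymbol\theta$ and the Hessian $\boldsymbol H|_{\boldsymbol 1^\perp}$ is positive definite, so $\boldsymbol\theta$ is a strict local minimum of the restriction by the second-derivative test. Any point $\boldsymbol\phi$ near $\boldsymbol\theta$ can be written as $\boldsymbol\phi=\boldsymbol\psi+c\boldsymbol 1$ with $\boldsymbol\psi$ in the slice and near $\boldsymbol\theta$ (take $c$ to be the mean of $\boldsymbol\phi-\boldsymbol\theta$). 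Then $E_{\boldsymbol A}(\boldsymbol\phi)=E_{\boldsymbol A}(\boldsymbol\psi)\ge E_{\boldsymbol A}(\boldsymbol\theta)$, so $\boldsymbol\theta$ is a local minimum, strict modulo global rotation.

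For the dynamical claim, the gradient flow \eqref{kuramoto} preserves $\sum_i\theta_i$, since $\boldsymbol 1^\top\nabla E_{\boldsymbol A}=0$. Its linearization at $\boldsymbol\theta$ is $-\boldsymbol H$, which restricted to $\boldsymbol 1^\perp$ has all eigenvalues negative. Hence the equilibrium is exponentially stable within each invariant slice, i.e., locally asymptotically stable modulo global rotation.

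\textbf{Main obstacle.} There is no deep step. The only points needing care are the reduction modulo rotation (the Hessian is singular in the direction $\boldsymbol 1$) and the inequality $\cos(2\pi/n)>0$, which is exactly where $n\ge5$ is used. For $n=4$ one has $\cos(\pi/2)=0$, which gives the flat, zero-Hessian state mentioned in the introduction.
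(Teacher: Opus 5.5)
Your proof is correct. It agrees with the paper on the rank-width citation, on stationarity, on the identity $\boldsymbol H=\cos(2\pi/n)\,L_{C_n}$, and on non-flatness via Lemma~\ref{lem:flat-hessian}. The routes differ only in the local-minimality step.

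The paper applies Taylor's formula with integral remainder to $g(t)=E_{\boldsymbol A}(\boldsymbol\theta+t\boldsymbol h)$ and writes $g''(t)=\sum_i\cos\bigl(\theta_i-\theta_{i+1}+t(h_i-h_{i+1})\bigr)(h_i-h_{i+1})^2$. It notes that every edge cosine stays positive for small $\boldsymbol h$. Hence $E_{\boldsymbol A}(\boldsymbol\theta+\boldsymbol h)-E_{\boldsymbol A}(\boldsymbol\theta)=\int_0^1(1-t)g''(t)\,dt\ge0$, with equality only for $\boldsymbol h\in\mathbb R\boldsymbol 1$. That argument is elementary and needs no symmetry reduction. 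However, it depends on the special fact that every edge cosine is positive at the twisted state, which makes $g''(t)\ge0$ term by term.

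Your reduction to the slice $\boldsymbol\theta+\boldsymbol 1^\perp$ plus the second-derivative test needs only positive definiteness of $\boldsymbol H$ on $\boldsymbol 1^\perp$ and rotation invariance, so it is more general. It is exactly the reasoning the paper uses implicitly in Example~\ref{ex:weighted-k5}. There the diagonal edges have $\cos(4\pi/5)<0$, so the termwise sign argument for $g''$ no longer applies.

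You also justify the asymptotic-stability claim, which the paper's proof of this proposition does not address. To be precise, each nearby invariant slice $\{\sum_i\theta_i=\mathrm{const}\}$ contains the rotated equilibrium $\boldsymbol\theta+c\boldsymbol 1$ rather than $\boldsymbol\theta$ itself. Translation equivariance makes the exponential stability uniform across slices, and that is what stability modulo global rotation requires.
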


Figure~\ref{fig:c5-twisted-state} illustrates the case $n=5$.

\begin{figure}[H]
\centering
\begingroup
\captionsetup[subfigure]{font=small,labelfont=normalfont,justification=centering,skip=3pt}
\tikzset{
  cycle canvas/.style={x=1cm,y=1cm,line cap=round,line join=round,
    every node/.style={font=\small,inner sep=1pt}},
  cycle vertex/.style={circle,fill=black,draw=black,inner sep=1.55pt},
  cycle edge/.style={draw=black,line width=0.55pt},
  cycle cut edge/.style={draw=black,line width=1.2pt},
  cycle leaf label/.style={inner sep=0pt,outer sep=0pt},
  phase vector/.style={draw=black,line width=0.65pt,
    -{Stealth[length=3.5pt,width=3.5pt]}}
}
\begin{minipage}{15.0cm}
\centering
\begin{subfigure}[b]{0.32\linewidth}
  \centering
  \begin{tikzpicture}[cycle canvas]
    \path[use as bounding box] (0,0) rectangle (4.8,3.75);
    \begin{scope}[shift={(2.3,1.8)}]
      \foreach \i/\angle in {1/90,2/18,3/-54,4/-126,5/162}
        \coordinate (cv\i) at (\angle:1.35);
      \draw[cycle edge]
        (cv1)--(cv2)--(cv3)--(cv4)--(cv5)--cycle;
      \foreach \i/\angle in {1/90,2/18,3/-54,4/-126,5/162} {
        \node[cycle vertex] at (cv\i) {};
        \node at (\angle:1.66) {$\i$};
      }
    \end{scope}
  \end{tikzpicture}
  \caption{Cycle $C_5$}
\end{subfigure}\hfill%
\begin{subfigure}[b]{0.32\linewidth}
  \centering
  \begin{tikzpicture}[cycle canvas]
    \path[use as bounding box] (-0.1,0) rectangle (4.7,3.75);
    \coordinate (cu) at (0.9,1.65);
    \coordinate (cv) at (2.3,1.65);
    \coordinate (cw) at (3.7,1.65);
    \coordinate (cl1) at (0.15,2.95);
    \coordinate (cl2) at (0.15,0.5);
    \coordinate (cl3) at (2.3,3.1);
    \coordinate (cl4) at (4.45,2.95);
    \coordinate (cl5) at (4.45,0.5);
    \draw[cycle edge] (cl1)--(cu)
      node[midway,left=3pt] {$1$};
    \draw[cycle edge] (cu)--(cl2)
      node[midway,left=3pt] {$1$};
    \draw[cycle edge] (cv)--(cl3)
      node[midway,right=3pt] {$1$};
    \draw[cycle edge] (cl4)--(cw)
      node[midway,right=3pt] {$1$};
    \draw[cycle edge] (cw)--(cl5)
      node[midway,right=3pt] {$1$};
    \draw[cycle cut edge] (cu)--(cv)
      node[midway,below=3pt] {$2$};
    \draw[cycle cut edge] (cv)--(cw)
      node[midway,below=3pt] {$2$};
    \foreach \name in {cu,cv,cw,cl1,cl2,cl3,cl4,cl5}
      \node[cycle vertex] at (\name) {};
    \node[cycle leaf label,above left=2pt] at (cl1) {$1$};
    \node[cycle leaf label,below left=2pt] at (cl2) {$2$};
    \node[cycle leaf label,above=4pt] at (cl3) {$3$};
    \node[cycle leaf label,above right=2pt] at (cl4) {$4$};
    \node[cycle leaf label,below right=2pt] at (cl5) {$5$};
  \end{tikzpicture}
  \caption{Rank-decomposition}
\end{subfigure}\hfill%
\begin{subfigure}[b]{0.32\linewidth}
  \centering
  \begin{tikzpicture}[cycle canvas]
    \path[use as bounding box] (0,0) rectangle (4.8,3.75);
    \begin{scope}[shift={(2.3,1.85)}]
      \draw[draw=black,line width=0.45pt] (0,0) circle[radius=1.25];
      \foreach \i/\angle in {1/0,2/72,3/144,4/216,5/288}
        \draw[phase vector] (0,0)--(\angle:1.25);
      \fill[black] (0,0) circle[radius=0.8pt];
      \foreach \i/\angle in {1/0,2/72,3/144,4/216,5/288}
        \node at (\angle:1.62) {$\i$};
      \draw[draw=black,line width=0.4pt,
        -{Stealth[length=2.5pt,width=2.5pt]}]
        (0:0.5) arc[start angle=0,end angle=72,radius=0.5];
      \node at (36:0.86) {$2\pi/5$};
    \end{scope}
  \end{tikzpicture}
  \caption{Twisted state on $\mathbb S^1$}
\end{subfigure}
\end{minipage}
\endgroup
\caption{The $1$-twisted state on $C_5$.
(a) The cycle with unit edge weights.
(b) A rank-decomposition with each tree edge labeled by its cut-rank:
two for nonpendant edges and one for pendant edges.
(c) The phase vectors $\boldsymbol v_k$ at angles $2\pi(k-1)/5$,
equally spaced on the unit circle.
This nonsynchronized configuration is both a local minimum and a non-flat
first- and second-order point. The corresponding equilibrium is locally
asymptotically stable modulo global rotation.}
\label{fig:c5-twisted-state}
\end{figure}

\begin{proof}
The rank-width assertion follows from Example~\ref{ex:cycle-rank-width}.
Label the vertices of $C_n$ cyclically by $1,\ldots,n$ and consider the
$1$-twisted state
\[
\theta_k=\frac{2\pi(k-1)}{n},
\qquad k=1,\ldots,n.
\]
Its phases are not all equal, so $\boldsymbol\theta$ is nonsynchronized.
At every vertex, the phase differences with its two neighbors are
$\pm 2\pi/n$ modulo $2\pi$. The two sine terms in \eqref{eq:foc}
therefore cancel, so $\boldsymbol\theta$ is a first-order
point of $E_{\boldsymbol A}(\boldsymbol\theta)$.

At this state, every edge satisfies $\cos(\theta_i-\theta_j)=\cos(2\pi/n)$. Therefore,
the Hessian is
\[
\boldsymbol H
=
\cos\left(\frac{2\pi}{n}\right)L_{C_n},
\]
where $L_{C_n}=2I-\boldsymbol A$ is the graph Laplacian of $C_n$
and $I$ is the $n\times n$ identity matrix. Since
$n\geq5$, we have $\cos(2\pi/n)>0$. Since $C_n$ is connected,
\[
\boldsymbol H\succeq0,
\qquad
\ker\boldsymbol H
=
\operatorname{span}\{\boldsymbol 1\},
\]
so $\boldsymbol\theta$ satisfies both the first- and second-order
conditions. Lemma~\ref{lem:flat-hessian} implies that it is non-flat. Therefore, it is a non-flat nonsynchronized first- and second-order point.

To verify local minimality, let $\boldsymbol h\in\mathbb R^n$
be an arbitrary sufficiently small perturbation and set
$g(t)=E_{\boldsymbol A}(\boldsymbol\theta+t\boldsymbol h)$
for $t\in[0,1]$.
Stationarity gives $g'(0)=0$, and
\[
g''(t)=\sum_{i=1}^n
\cos\bigl(\theta_i-\theta_{i+1}+t(h_i-h_{i+1})\bigr)
(h_i-h_{i+1})^2,
\]
where indices are interpreted modulo $n$.
Since every edge satisfies
$\cos(\theta_i-\theta_{i+1})=\cos(2\pi/n)>0$,
all cosine factors remain positive for every $t\in[0,1]$
when $\boldsymbol h$ is sufficiently small.
Consequently,
\[
E_{\boldsymbol A}(\boldsymbol\theta+\boldsymbol h)
-E_{\boldsymbol A}(\boldsymbol\theta)
=\int_0^1(1-t)g''(t)\,dt\geq0.
\]
Equality holds if and only if $h_1=\cdots=h_n$,
that is, the perturbation is a common rotation.
Thus $\boldsymbol\theta$ is a strict local minimum modulo
global rotation, and in particular a local minimum.
\end{proof}

\subsection{Sharpness in the weighted case}
\label{subsec:weighted-example}
The weighted graph in Figure~\ref{fig:intro-weights}(C) establishes
sharpness of both conclusions of Theorem~\ref{main}, even when the
support graph is complete.

\begin{example}[A weighted complete graph with a nonsynchronized local minimum]
\label{ex:weighted-k5}
Let $V=\{0,1,2,3,4\}$, with indices interpreted modulo $5$, and define
\[
\boldsymbol A_\varepsilon=
\begin{pmatrix}
0&1&\varepsilon&\varepsilon&1\\
1&0&1&\varepsilon&\varepsilon\\
\varepsilon&1&0&1&\varepsilon\\
\varepsilon&\varepsilon&1&0&1\\
1&\varepsilon&\varepsilon&1&0
\end{pmatrix}.
\]
For
\[
0<\varepsilon<\frac{7-3\sqrt5}{2},
\]
the configuration $\theta_k=2\pi k/5$ is a nonsynchronized strict
local minimum of $E_{\boldsymbol A_\varepsilon}(\boldsymbol\theta)$ modulo global
rotation and a non-flat first- and second-order point.
All off-diagonal entries of $\boldsymbol A_\varepsilon$
are strictly positive, so its support graph is $K_5$.
Moreover, $\operatorname{rw}_{\mathbb R}(\boldsymbol A_\varepsilon)=2$.
\end{example}

\begin{proof}
Every cut of a five-vertex graph has a side of size at most two, so
$\operatorname{rw}_{\mathbb R}(\boldsymbol A_\varepsilon)\leq2$.
Every rank-decomposition has a cut isolating two vertices; by cyclic
symmetry, a corresponding $2\times2$ minor has determinant
$\varepsilon(\varepsilon-1)$ if the vertices are adjacent on the cycle,
and $1-\varepsilon$ otherwise. Both are nonzero in the stated range,
so the rank-width is two.

At each vertex, the phase differences occur in opposite pairs
$\pm2\pi/5$ with weight $1$ and $\pm4\pi/5$ with weight
$\varepsilon$. The sine terms therefore cancel, proving stationarity.

Set $c=\cos(2\pi/5)=(\sqrt5-1)/4$ and
$d=\cos(4\pi/5)=-(\sqrt5+1)/4$. By \eqref{hessian},
\[
(\boldsymbol H\boldsymbol x)_k
=c(2x_k-x_{k-1}-x_{k+1})
+\varepsilon d(2x_k-x_{k-2}-x_{k+2}).
\]
The Fourier vectors $x_k=\exp(2\pi\mathrm{i}mk/5)$,
$m=0,\ldots,4$, where $\mathrm{i}^2=-1$, diagonalize this circulant
matrix, with eigenvalues
\[
\lambda_m
=2c\bigl(1-\cos(2\pi m/5)\bigr)
+2\varepsilon d\bigl(1-\cos(4\pi m/5)\bigr).
\]
In particular,
\[
\begin{aligned}
\lambda_0&=0,\\
\lambda_1=\lambda_4
&=\frac{3\sqrt5-5-(5+3\sqrt5)\varepsilon}{4},\\
\lambda_2=\lambda_3&=\frac{\sqrt5}{2}(1-\varepsilon).
\end{aligned}
\]
The assumed range of $\varepsilon$ makes all four nonzero-mode
eigenvalues strictly positive. Thus $\boldsymbol H$ is positive
definite on $\boldsymbol 1^\perp$, proving strict local minimality modulo
global rotation. Moreover, $\boldsymbol H\succeq0$ and
$\ker\boldsymbol H=\operatorname{span}\{\boldsymbol 1\}$, so
Lemma~\ref{lem:flat-hessian} gives non-flatness.
Since the phases are not all equal, the configuration
is not a global minimum by \eqref{eq:global-energy-gap}.
\end{proof}

For example, $\varepsilon=1/10$ gives nonzero Hessian eigenvalues
approximately $0.13435$ and $1.00623$, each with multiplicity two.
The corresponding equilibrium is locally asymptotically stable modulo
global rotation, because the linearization of \eqref{kuramoto} is
$-\boldsymbol H$. Hence it attracts an open set of initial conditions
and also rules out almost-everywhere convergence to a synchronization configuration.
In particular, the distance-hereditary support condition cannot replace
the weighted cut-rank condition in Theorem~\ref{main}.

\section*{Acknowledgements}
The author thanks Monique Laurent for helpful discussions, careful
reading of the manuscript, and valuable suggestions on the proofs
and presentation.

\section*{Declaration of AI use}
Large language models were used to improve the presentation of the results through language editing and assistance with TikZ code for figures. All mathematical results and proofs are the author's own, and the author takes full responsibility for all content.

\bibliographystyle{postive-alpha}
\bibliography{postive}
\end{document}